\newcommand{\graph}[1]{#1}
\newcommand{\vgraphone}[1]{V_1(#1)}
\newcommand{\vgraphtwo}[1]{V_2(#1)}
\newcommand{\edgesgraph}[1]{E(#1)}
\newcommand{\ngraph}[1]{\mathcal{N}(#1)}
\newcommand{\ngraphT}[1]{\mathcal{N}_T(#1)}
\newcommand{\affinegraphone}[1]{\Gamma_1}
\newcommand{\affinegrapha}[1]{\Gamma_2}
\newcommand{\vectorname}[1]{\textbf{#1}}
\newcommand{\ff}[1]{{\mathbb F}_{#1}}
\newcommand{\projectivespace}[2]{{\mathbb P}^{#1}(#2)}
\newcommand{\lines}{\mathcal{L}}
\newcommand{\grassmann}[2]{\mathcal{G}_{#1, #2}}
\newcommand{\grassmanncode}[2]{\mathcal{C}(#1, #2)}
\newcommand{\schubertcodealpha}[3]{\mathcal{C^{#3}}(#1, #2)}
\newcommand{\cl}[3]{cl_{#1, #2}(#3)}
\newtheorem{thm}{Theorem}[section]
\newtheorem{defn}[thm]{Definition}
\newtheorem{lem}[thm]{Lemma}
\newtheorem{cor}[thm]{Corollary}
\newtheorem{prop}{Proposition}
\title{The structure of dual Schubert union codes}
\author{Fernando L. Pi\~nero \\ University of Puerto Rico -- Ponce}
\begin{document}
\maketitle

\begin{abstract}

In this article we prove that Schubert union codes are Tanner codes constructed from the point--line incidence geometry inherited from the Grassmannian. Our proof is based on an iterative encoding algorithm for Tanner codes.  This encoder determines the entries of a codeword of a Tanner code from the entries in a given subset of its positions. As a result, we find sufficient conditions on the initial positions such that a codeword is determined from the component codes only. This algorithm has linear complexity on the code length. We also use this encoder to determine the minimum distance of Schubert union codes in terms of the minimum distance of the Schubert varieties contained therein.
\end{abstract}

\section{Introduction}

The Grassmann variety may be defined as the set of all subspaces of a finite dimensional vector space $V$ with a fixed dimension. We shall focus on $V = \ff{q}^m$ and consider the Grassmannian of subspaces of dimension $\ell$. The Grassmannian over other fields is also important. The linear codes  from the Grassmann variety, Schubert variety and Schubert unions are used to understand the projective systems of the associated varieties \cite{NTV07}. For example, the generalized Hamming weights hold information about these varieties. Tanner introduced Tanner codes in \cite{T81} as a way of building a long, complex code from a shorter, simpler code and a bipartite graph. We aim to study Schubert union codes as Tanner codes. This construction reflects the way Schubert unions are constructed from their related finite incidence structures.

We lay some ground work on encoding Tanner codes with irreversible $k$--threshold processes. We give conditions on the positions which must be encoded correctly in order to determine the remaining parity check bits using only the graph and the component code. Schubert union codes may be encoded iteratively using only an encoder of the doubly extended Reed-Solomon code. As a corollary, the dual codes of Schubert union codes are generated by their minimum weight codewords. 

We conclude the article with some lower and upper bounds on the minimum distance of Grassmann codes using the eigenvalues of the Tanner graph. This offers an alternative method to understand the minimum distance and Generalized Hamming weight spectrum of Grassmann codes. We hope this approach could prove useful for determining the Generalized Hamming Weights of Grassmann codes and Schubert codes. First, we recall some basic concepts from coding theory.

Let $A$ be a finite set and $q$ a prime power. The elements of $\ff{q}^A$ are considered as functions from $A$ to $\ff{q}$. A function $f: A \rightarrow \ff{q}$ is represented as the vector $(f_a)_{a \in A}$ where $f_a = f(a)$. Usually $A = \{1,2, \ldots, n\}$, but we may use any finite set to index the coordinates.  By a \emph{code} in $\ff{q}^A$, we shall mean an $\ff{q}$--linear subspace of $\ff{q}^A$. 

Projecting a code onto some coordinates is a fundamental operation to make short codes from longer codes.  If $C$ is a code in $\ff{q}^A$ and $\phi: B \rightarrow B'$ is a bijection of $B \subseteq A$ onto $B'$, then $\phi$ induces a map of $C^B$ into a code on $B'$. We also denote this map by $\phi$. 

\begin{defn}\cite{MS77}
Let $C$ be a code in $\ff{q}^A$. Let $B \subseteq A$.  The \emph{projection of $C$ on $B$} is the code in $\ff{q}^B$ obtained by projecting $C$ onto the coordinates given by $B$, that is: 
$$C^B := \{ (c_i)_{i \in B} \ | \ (c_i)_{i \in A} \in C \} .$$
\end{defn}

Note that $C^B$ is a code of length $\#B$. To project a code $C$ onto $B$ is to discard or delete the coordinates in $A\setminus B$. When $\dim C^B  = \dim C$, the code $C^B$ is also the \emph{puncturing of $C$ at $A \setminus B$}.

\begin{defn}\cite{MS77}
Let $C$ be a code in $\ff{q}^A$. For $c \in C$ we define the \emph{support of $c$} as $$supp(c) := \{  a \in A \ | \ c_a \neq 0 \}.$$ 
\end{defn}

\begin{defn}\cite{MS77}
Let $C$ be a code in $\ff{q}^A$. Let $B \subseteq A$. We say $B$ is an \emph{information set} for $C$ if $\# B = \dim C$ and $C^B = \ff{q}^B$. Suppose $B' \subseteq A$ contains an information set $B$. A \emph{lengthening of $C^{B}$} is to determine in any way $m \in C \subseteq \ff{q}^{A}$ from its image $m^{B'} \in C^{B'}$.  If $B$ is an information set, lengthening $C^{B}$ to $C$ is a \emph{systematic encoder}.\end{defn}

Projecting $C$ onto $B$ is a linear mapping from $C$ to $C^B.$ When $B$ contains an information set, this mapping is a linear isomorphism. A code is lengthened by adding a parity check bit. Lengthening increases the length while keeping a reasonable bound on the dimension and minimum distance. Lengthening a code $C^B$ up to $C^A$ is the same as determining the parity check bits in $A \setminus B$.

\section{Tanner codes}

Tanner codes, introduced in \cite[Section II]{T81}, are a class of codes constructed from a bipartite graph and a shorter code. Any code may be described as a Tanner code. The main feature of Tanner codes is their iterative decoding algorithm, which means a fixed proportion of errors can be corrected easily. 

\begin{defn}
A bipartite graph $\graph{G}$ is a triple $(\vgraphone{G}, \vgraphtwo{G}, \edgesgraph{G})$ where $\vgraphone{G}$ and $\vgraphtwo{G}$ are finite sets and $\edgesgraph{G} \subseteq \vgraphone{G} \times \vgraphtwo{G}$. The elements of $\vgraphone{G}$ and $\vgraphtwo{G}$ are called \emph{vertices.}  For a vertex $u \in \vgraphtwo{G}$ we define the \emph{neighborhood} of $u$ as $$\ngraph{u} := \{  v \in \vgraphone{G} \  | \ (v,u) \in \edgesgraph{G} \}.$$
\end{defn}

A bipartite graph $\graph{G} = (\vgraphone{G}, \vgraphtwo{G}, \edgesgraph{G})$ may be also represented by identifying $\vgraphtwo{G}$ with the collection $\{ \ngraph{u} \subseteq | \ u \in V_2(G)  \}$ of subsets of $V_1(G)$. Likewise, any such collection of subsets of $\vgraphone{G}$ determine an unique bipartite graph. This view is closer to incidence geometries.  When $V_2(G)$ is represented by a collection of subsets of $V_1(G)$, the edge set $\edgesgraph{G}$ is defined by inclusion. We impose a right regularity condition to simplify our definitions. 

\begin{defn}
Let $\graph{G} = (\vgraphone{G}, \vgraphtwo{G}, \edgesgraph{G})$ be a bipartite graph.  Let $n' \leq \#V_1(G)$ be a positive integer. The graph $G$ is an \emph{$n'$-right regular bipartite graph} if $$\#\ngraph{u} = n' \makebox{ for each } u \in V_2(G).$$  \end{defn}

\begin{defn}\cite{T81} Let $n'$ be an integer and let $\mathcal{N}'$ be a set such that $ \# N' = n'$. Suppose $G$ is an $n'$-right regular bipartite graph. Let $C'$ be a code in $\ff{q}^{\mathcal{N}'}$. We say $C$ is a \emph{Tanner code with component code $C'$ and associated bipartite graph $G$} if $C$ is a code in $\ff{q}^{\vgraphone{G}}$  such that for each $u \in \vgraphtwo{G}$ there exists $C'_u$, a code equivalent to $C'$, and $\phi_u$, a bijection between $\ngraph{u}$ and $\mathcal{N}'$, such that $$\phi_u(C^{\ngraph{u}}) \subseteq C'_u.$$ To emphasize the role of $G$ and $C'$, the code $C$ is usually denoted by $(G,C')$.
\end{defn}

A linear code $C$ is defined in terms of parity check equations. Instead of having parity check equations on all of $V_1(G)$ a Tanner code uses only some short parity check equations defined in terms of the shorter, simpler component code, $C'$ on the subsets $\ngraph{u} \subseteq \vgraphone{G}$ for $u \in \vgraphtwo{G}$. The vertices in $\vgraphone{G}$ are known as \emph{variable nodes}. The vertices in $\vgraphtwo{G}$ are \emph{constraint nodes} because they represent parity check equations $(G,C')$ must satisfy. Tanner codes are also known as \emph{generalized LDPC codes}. 

\begin{defn}Let $n'$ be an integer and let $\mathcal{N}'$ be a set of size $n'$.  Suppose $G$ is an $n'$-right regular bipartite graph. Let $C'$ be a code of length $n'$ in $\ff{q}^{\mathcal{N}'}.$ Let $\phi = (\phi_u) $ be an $\# V_2(G)$-tuple where each $\phi_u$ is a bijective map from $\ngraph{u}$ to $\mathcal{N}'.$ In addition,  let  $\mathcal{C}= (C'_u) $ be an $\# V_2(G)$-tuple where each $C'_u$ is equivalent to $C'$.  For $u \in V_2(G)$, let $D_u \leq \ff{q}^{V_1(G)}$  be the code which satisfies:  $$D_u^{\ngraph{u}} = \phi_u^{-1}(C'_u)^\perp$$ and $$supp(d) \subseteq \ngraph{u}$$ for each $d \in D_u$.

We define the maximal Tanner code for $\phi$ and $\mathcal{C}$ as $$(G,C')_{\phi, \mathcal{C}} = D^\perp$$  $$\makebox{ where } D = span(\{ D_u \ | \ u \in V_2(G)\})^{\perp}.$$\label{def:MaxTannercode}\end{defn}

Usually one focuses on $G$ and $C$ when working with Tanner codes. The graph $G$, the component code $C'$ and the codes $C'_u$, and the permutations $\phi_u$ also play an important role in the Tanner code's performance. Different permutations or equivalent codes may change the Tanner code. Now we prove that $(G,C')_{\phi, \mathcal{C}}$ is the largest code which is a Tanner code for $\phi_u$ and the codes $C'_u$.
\begin{lem}
 Let $n'$ be an integer and let $\mathcal{N}'$ be a set of size $n'$. Suppose $G$ is an $n'$-right regular bipartite graph. Let $C'$ be a code of length $n'$ in $\ff{q}^{\mathcal{N}'}.$ Suppose $C = (G, C')$ is a Tanner code for the codes $C'_u$ and the bijections $\phi_u$. If $\phi = (\phi_u) $ and $\mathcal{C}= (C'_u)$, then $C \subseteq (G,C')_{\phi, \mathcal{C}}.$
\label{lem:MaxTannerthm}
\end{lem}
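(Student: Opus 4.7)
The plan is to unpack the definition of $(G,C')_{\phi,\mathcal{C}}$ and show directly that every codeword of $C$ is orthogonal to the generating set of the dual. Reading Definition \ref{def:MaxTannercode}, the code $(G,C')_{\phi,\mathcal{C}}$ is the dual (in $\ff{q}^{V_1(G)}$) of the span of the codes $D_u$, so it suffices to show that every $c \in C$ satisfies $\langle c, d\rangle = 0$ for every $d \in D_u$ and every $u \in V_2(G)$.

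The first step is to use the support hypothesis on $D_u$. Since $\mathrm{supp}(d) \subseteq \ngraph{u}$, the ambient inner product collapses:
\[
\langle c, d\rangle_{V_1(G)} \;=\; \sum_{i \in V_1(G)} c_i d_i \;=\; \sum_{i \in \ngraph{u}} c_i d_i \;=\; \langle c^{\ngraph{u}}, d^{\ngraph{u}}\rangle_{\ngraph{u}}.
\]
So the problem reduces to an inner product computed inside $\ff{q}^{\ngraph{u}}$.

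The second step uses the Tanner code hypothesis. Because $C$ is a Tanner code for the bijections $\phi_u$ and the component codes $C'_u$, one has $\phi_u(C^{\ngraph{u}}) \subseteq C'_u$, equivalently $c^{\ngraph{u}} \in \phi_u^{-1}(C'_u)$. On the other hand $D_u^{\ngraph{u}} = \phi_u^{-1}(C'_u)^{\perp}$ by the defining property of $D_u$, so $d^{\ngraph{u}} \in \phi_u^{-1}(C'_u)^\perp$. Since $\phi_u$ is a bijection (and the inner product is permutation-invariant, or one may simply note that $\phi_u^{-1}(C'_u)$ and its dual are orthogonal subspaces of $\ff{q}^{\ngraph{u}}$), the reduced inner product vanishes.

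Combining these two steps gives $\langle c, d\rangle = 0$ for every $d \in \bigcup_u D_u$, hence for every $d$ in its span. Therefore $c$ lies in the dual of $\mathrm{span}(\{D_u \mid u \in V_2(G)\})$, which is exactly $(G,C')_{\phi,\mathcal{C}}$. There is no real obstacle here beyond carefully tracking the three objects $C^{\ngraph{u}}$, $\phi_u^{-1}(C'_u)$, and $D_u^{\ngraph{u}}$; the content of the lemma is that the Tanner parity checks on each neighborhood, extended by zero to $V_1(G)$, are the only parity checks the definition imposes, and any Tanner code for the same data automatically satisfies them.
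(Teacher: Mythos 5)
Your proof is correct and follows essentially the same route as the paper: the paper dualizes the local containment $\phi_u(C^{\ngraph{u}}) \subseteq C'_u$ to conclude $D_u \subseteq C^\perp$ for each $u$, and then passes to the span and takes duals, which is exactly your argument with the orthogonality $\langle c, d\rangle = 0$ written out explicitly via the support condition. The only difference is expository detail, so nothing further is needed.
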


\begin{proof}
The containment $\phi_u^{-1}(C^{\ngraph{u}}) \subseteq C'_u$ implies $(C'_u)^\perp \subseteq \phi_u^{-1}(C^{\ngraph{u}})^\perp.$ For $u \in V_2(G)$ consider the code $D_u \leq \ff{q}^{V_1(G)}$ where the code $D_u^{\ngraph{u}} = \phi_u^{-1}(C'_u)^\perp$ and $supp(d) \subseteq \ngraph{u}$ for $d \in D_u$.  Therefore, $D_u \subseteq C^\perp$ for each $u \in V_2(G)$ which implies $C \subseteq D^\perp = (G,C')_{\phi, \mathcal{C}}$.\end{proof}

Any code is a Tanner code. For example, when the length of a cyclic code, $C$,  is coprime to the field characteristic, the parity checks for $C$ are generated by the cyclic shifts of a single vector, say $h = (h_0, h_1, \ldots, h_{n-1})$. Define $G$ as the bipartite graph where $V_1(G) = V_2(G)  = \mathbb{Z}_n$ and $(a,b) \in E(G)$  if and only if $a-b \in supp(h)$. Define $h'$ as the projection of $h$ onto $supp(h)$. Let $C' \leq \ff{q}^{supp(h)}$ where $C' = (h')^\perp$. Note that for each $b \in V_2$,  $\ngraph{b} = \{a+b \ | \ a \in supp(h)  \}$. Thus for $b \in V_2$ we consider the bijection $\phi_{b}$ from $\ngraph{u}$ onto $supp(h)$ such that $\phi_b(a+b) = a$. With this description the cyclic shifts of $h$ are the parity check equations of $(G, C')$. Therefore, $(G,C')  = C$.

\section{Irreversible $k$--threshold processes and $k$--forcing sets}

In this section, we consider irreversible $k$--threshold processes in a bipartite graph. Our aim is to simulate encoding a Tanner code as such a process, and determine the consequences this may have for Tanner codes.  Irreversible $k$--threshold processes are defined as follows.

\begin{defn}[Irreversible $k$--threshold process] \cite{DR09}.

Suppose $G = (V,E)$ is a graph. At time $t=0$, each vertex of $G$ has one of two states, either $0$ or $1$.  When the time $t$ increases from $i$ to $i+1$,  a vertex switches from state $0$ to state $1$ when at least $k$ of its neighbors are in state $1$. Once a vertex is in state $1$, it shall remain in state $1$.

\end{defn}

Originally irreversible $k$--threshold processes are defined for a graph $G$, see \cite[Section 1.1]{DR09}. State $0$ can represent uninfected people, and state $1$ may represent infected people. We are particularly interested when state $0$ represents an unencoded bit and state $1$ represents an encoded bit. We state a bipartite version of this definition which turns out to be useful in encoding Tanner codes.

\begin{defn}[Irreversible $k$--threshold process for bipartite graphs].

Suppose the graph $G = (\vgraphone{G}, \vgraphtwo{G},E(G) )$ is an $n'$--right regular bipartite graph. For $t=0$,  each vertex in  $\vgraphone{G}$ has one of two states, either $0$ or $1$.   When the time $t$ increases from $i$ to $i+1$ and these exists $u \in \vgraphtwo{G}$ with at least $k$ neighbors with state $1$, then all $v \in \ngraph{u}$ also switch to state $1$. Once a vertex $v \in \vgraphone{G}$ is in state $1$ it will always remain in state $1$.

\end{defn}

\begin{algorithm}
\begin{algorithmic}
\STATE Input: $G$ a bipartite $n'$-right regular graph, $1 \leq k \leq n'$ and $S \subseteq V_1(G)$.
\STATE Initialize $Z := S.$
\STATE Initialize $t := 0$
\WHILE {$\exists u \in V_2(G) \ : \ k \leq \# (\ngraph{u} \cap Z)  < n'$}
\STATE Initialize $t := t+1$
\STATE Set $Z := Z \cup \ngraph{u}$
\ENDWHILE
\STATE Return $Z$
\end{algorithmic}
\caption{ Irreversible $k$--threshold process for bipartite graphs}
\label{alg:1}
\end{algorithm}

The set $Z$ in Algorithm \ref{alg:1} represents the vertices of $V_1(G)$ in state $1$. Once a vertex in $u \in V_2(G)$ has at least $k$ neighbors in $Z$, then all  $v \in \ngraph{u}$ are added to $Z$.  As $\vgraphtwo{G}$ is finite, this process ends in a finite number of steps.

\begin{lem}
Let $G$ be an $n'$-right regular bipartite graph, $k \leq n'$ and $S \subseteq \vgraphone{G}$. The output of Algorithm \ref{alg:1} does not depend on the order the vertices are added to the initial set $S$.
\end{lem}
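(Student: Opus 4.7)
The plan is to read the lemma as asserting that the nondeterministic choice of $u \in \vgraphtwo{G}$ inside the while loop of Algorithm \ref{alg:1} has no effect on the returned set $Z$. I would prove this by identifying the terminal value of $Z$ with a canonical closure operator on subsets of $\vgraphone{G}$, and then showing that every run of the algorithm computes exactly this closure.

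First, I would define $\mathrm{cl}(S)$ to be the smallest set $T \subseteq \vgraphone{G}$ satisfying $S \subseteq T$ and the following rule: for every $u \in \vgraphtwo{G}$, if $\#(\ngraph{u} \cap T) \geq k$ then $\ngraph{u} \subseteq T$. This set is well-defined because the family of subsets of $\vgraphone{G}$ meeting both conditions is nonempty (it contains $\vgraphone{G}$ itself), and an arbitrary intersection of such sets again satisfies the rule, so $\mathrm{cl}(S)$ is simply the intersection of all closed supersets of $S$.

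Next I would establish the two inclusions. By induction on the number of while-loop iterations, $Z \subseteq \mathrm{cl}(S)$ is invariant regardless of which $u$ is chosen at each step: the base case $Z = S \subseteq \mathrm{cl}(S)$ is immediate, and in the inductive step, whenever $u$ is fired we have $\#(\ngraph{u} \cap \mathrm{cl}(S)) \geq \#(\ngraph{u} \cap Z) \geq k$ by monotonicity and the hypothesis, so the closure property of $\mathrm{cl}(S)$ forces $\ngraph{u} \subseteq \mathrm{cl}(S)$, and hence $Z \cup \ngraph{u} \subseteq \mathrm{cl}(S)$. Conversely, upon termination no $u$ satisfies $k \leq \#(\ngraph{u} \cap Z) < n'$, which means that any $u$ with $\#(\ngraph{u} \cap Z) \geq k$ already has $\ngraph{u} \subseteq Z$; thus the terminal $Z$ is itself closed, and since it still contains $S$, it contains $\mathrm{cl}(S)$. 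Both inclusions together yield $Z = \mathrm{cl}(S)$ at termination for every admissible sequence of choices, which is the desired order-independence.

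The main observation driving the argument is monotonicity of $Z$: once this is in place, confluence follows from the fixed closure $\mathrm{cl}(S)$ acting as both an upper and a lower bound, so no explicit diamond/Church--Rosser reshuffling of firing sequences is needed. Termination is not an obstacle either, since the while-loop guard $\#(\ngraph{u} \cap Z) < n'$ guarantees $\ngraph{u} \not\subseteq Z$, so $Z$ strictly grows at each iteration and $\vgraphone{G}$ is finite, as already remarked in the paragraph preceding the lemma statement.
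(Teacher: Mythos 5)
Your proof is correct, but it takes a genuinely different route from the paper. The paper compares two arbitrary runs directly: it lists the neighborhoods $\ngraph{u_1},\ldots,\ngraph{u_a}$ and $\ngraph{t_1},\ldots,\ngraph{t_b}$ fired in each run, picks a minimal index $i$ with $Z_i \subseteq Z'$ but $Z_{i+1} \not\subseteq Z'$, derives a contradiction (implicitly using that the terminal set $Z'$ can no longer fire any constraint, so $\ngraph{u_{i+1}} \subseteq Z'$), and then swaps the roles of $Z$ and $Z'$ to get equality. You instead introduce the canonical closure $\mathrm{cl}(S)$, the intersection of all supersets $T$ of $S$ satisfying ``$\#(\ngraph{u}\cap T)\ge k$ implies $\ngraph{u}\subseteq T$'', and show every run is sandwiched: the invariant $Z\subseteq \mathrm{cl}(S)$ holds throughout by induction on iterations, while the terminal $Z$ is itself closed (here you correctly use $n'$--right regularity: failure of the guard with $\#(\ngraph{u}\cap Z)\ge k$ forces $\#(\ngraph{u}\cap Z)=n'$, i.e.\ $\ngraph{u}\subseteq Z$) and contains $S$, hence contains $\mathrm{cl}(S)$. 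Both arguments are sound and of comparable length; the paper's is more elementary in that it never names a closure operator, but its key step (``the elements of $\ngraph{u_{i+1}}$ will be added to some $Z'_j$'') quietly relies on exactly the closedness of the terminal set that you make explicit. Your route has the added benefit of characterizing the output $\cl{G}{k}{S}$ as the least closed superset of $S$, which the paper only extracts afterwards via Lemma~\ref{lem:closedsetintersectionnum} and the later lemma identifying $\cl{\Gamma}{2}{X}$ with the smallest geometric subspace containing $X$; with your formulation those statements become immediate corollaries.
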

\begin{proof} 

Suppose $Z$ and $Z'$ are two possible outputs of Algorithm \ref{alg:1} when the graph $G$, the set $S$ and $k$ are given as input. For $i =1,\ldots , a$, suppose $\ngraph{u_i}$ is added to $Z$ at time $t=i$. For each $i = 0, 1,\ldots , a$, define $$Z_i = S \cup \ngraph{u_1} \cup \ngraph{u_2} \cup \ldots \cup \ngraph{u_i}.$$ Likewise suppose that the neighborhood $\ngraph{t_j}$ is added to $Z'$ at time $j$. For $j = 0, 1,\ldots , b$, define $$Z'_j = S \cup \ngraph{t_1} \cup \ngraph{t_2} \cup \ldots \cup \ngraph{t_j}.$$ Note that $Z = Z_a$ and $Z' = Z'_b$. 

The statement of Algorithm \ref{alg:1} implies $k \leq \# (\ngraph{u_{i+1}} \cap Z_{i}) < n'$ for $i=0,1,\ldots, a-1$ and $k \leq \# (\ngraph{t_{j+1}} \cap Z'_{j}) < n'$ for $j=0,1,\ldots, b-1$.

We claim $Z \subseteq Z'$. Otherwise, if $Z \not\subseteq Z'$, let $i$ be minimal such that $Z_{i} \subseteq Z'$, but $Z_{i+1} \not\subseteq Z'$. Since $k \leq \#(Z_i \cap \ngraph{u_{i+1}}) < n'$ the elements in $\ngraph{u_{i+1}}$ will be added to some $Z'_j$ at some point in Algorithm \ref{alg:1}, therefore, $\ Z_{i+1} \subseteq Z'$ follows. This contradicts the choice of $i$. Thus, $Z \subseteq Z'$.  Equality follows after exchanging the roles of $Z$ and $Z'$ in the previous argument. \end{proof}

\begin{defn}

The output of Algorithm \ref{alg:1} for the graph $G$, the set $S$ and $k$ are given as input is denoted by $\cl{G}{k}{S}$. The set $\cl{G}{k}{S}$ is known as the \emph{$k$--closure of $S$ with respect to $G$}. A set of the form $\cl{G}{k}{S}$ is said to be \emph{$k$--closed w.r.t. $G$}.

\end{defn}

\begin{lem}
Let $G$ be an $n'$-right regular bipartite graph. Let $k \leq n'$.  The set $T \subseteq \vgraphone{G}$ is a $k$--closed set if and only if  for all $u \in \vgraphtwo{G}$ either $\#(T \cap \ngraph{u})  < k \makebox{ or } \ngraph{u} \subseteq T.$\label{lem:closedsetintersectionnum}
\end{lem}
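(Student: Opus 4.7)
The plan is to prove both directions by directly exploiting the termination condition of Algorithm \ref{alg:1}. A set is $k$--closed precisely when it is the output of the algorithm on some input $S$, which happens exactly when the while loop can no longer fire; this loop condition is essentially the negation of the condition in the lemma.

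For the forward direction, suppose $T$ is $k$--closed, so $T = \cl{G}{k}{S}$ for some $S$. Since $T$ is the returned value, the while loop's guard must have failed, meaning there is no $u \in V_2(G)$ with $k \leq \#(\mathcal{N}(u) \cap T) < n'$. For an arbitrary $u \in V_2(G)$, this leaves two possibilities: either $\#(\mathcal{N}(u) \cap T) < k$, or $\#(\mathcal{N}(u) \cap T) \geq n'$. Since $\#\mathcal{N}(u) = n'$ by right regularity, the second case forces $\mathcal{N}(u) \subseteq T$, which is the desired dichotomy.

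For the reverse direction, assume the dichotomy holds for $T$ and run Algorithm \ref{alg:1} on input $S := T$. The algorithm initializes $Z := T$, then checks whether any $u$ satisfies $k \leq \#(\mathcal{N}(u) \cap Z) < n'$. By hypothesis, every $u$ has either $\#(\mathcal{N}(u) \cap T) < k$ or $\mathcal{N}(u) \subseteq T$ (equivalently $\#(\mathcal{N}(u) \cap T) = n'$), so the while guard fails immediately. The algorithm returns $Z = T$, so $T = \cl{G}{k}{T}$ is $k$--closed.

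There is no real obstacle here; the lemma is essentially an unpacking of the termination criterion of Algorithm \ref{alg:1}, combined with the right regularity $\#\mathcal{N}(u) = n'$ which upgrades ``$\#(\mathcal{N}(u) \cap T) \geq n'$'' to full containment $\mathcal{N}(u) \subseteq T$. The only subtlety worth flagging is that one must choose $S = T$ itself in the reverse direction (rather than trying to reconstruct some smaller seed), which is legitimate since $k$--closed was defined as being of the form $\cl{G}{k}{S}$ for \emph{some} $S$.
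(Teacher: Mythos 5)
Your proof is correct, and it is exactly the argument the paper leaves implicit — the lemma is stated without proof, as it is a direct unpacking of the definition of a $k$--closed set (an output of Algorithm \ref{alg:1}) together with the while-loop guard, upgraded to set containment via right regularity. Choosing $S := T$ in the reverse direction is the right move and your justification of it is sound.
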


\begin{lem}

Let $G$ be an $n'$-right regular bipartite graph, $k \leq n'$ and $S \subseteq \vgraphone{G}$ If $t \in V_2(G)$ is such that $k \leq \ngraph{t} \cap S < n'$, then $\cl{G}{k}{S} = \cl{G}{k}{S\cup \ngraph{t}}$.
\label{lem:closedsubsets}
\end{lem}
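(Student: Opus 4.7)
The plan is to invoke the order--independence of Algorithm \ref{alg:1} established in the previous lemma, and to carry out a particular execution of the algorithm on input $S$ that begins by processing the vertex $t$. Because the hypothesis $k \leq \#(\ngraph{t} \cap S) < n'$ says exactly that $t$ satisfies the guard of the \textbf{while} loop with $Z = S$, it is a legitimate choice for $u$ at the first iteration.

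After executing this first iteration, $Z$ is updated to $S \cup \ngraph{t}$, and the remainder of the algorithm is nothing other than Algorithm \ref{alg:1} run on initial set $S \cup \ngraph{t}$. I would therefore argue that the final output of this chosen execution equals both $\cl{G}{k}{S}$ (by definition of the closure as the output of the algorithm on $S$, using order--independence) and $\cl{G}{k}{S\cup \ngraph{t}}$ (since from the state $Z = S\cup \ngraph{t}$ onward the execution coincides with an execution of the algorithm on input $S \cup \ngraph{t}$, again invoking order--independence to identify the outputs).

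Combining the two identifications yields $\cl{G}{k}{S} = \cl{G}{k}{S \cup \ngraph{t}}$, which is the claim. The only delicate point, and the place I would take care to be explicit, is the justification that we may split an execution starting from $S$ into ``one step processing $t$'' followed by ``an execution starting from $S \cup \ngraph{t}$''; this is precisely what the preceding order--independence lemma guarantees, provided the guard condition $k \leq \#(\ngraph{t}\cap S) < n'$ holds at the time $t$ is selected, which is the stated hypothesis. No case analysis or quantitative estimate is needed, so I do not expect any substantial obstacle beyond this bookkeeping.
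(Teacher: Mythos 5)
Your argument is correct. The paper states Lemma \ref{lem:closedsubsets} without giving a proof, so there is nothing to compare against; your route is a natural one and it holds up. Since the only state Algorithm \ref{alg:1} carries between iterations is the set $Z$, an execution on input $S$ whose first iteration selects $u=t$ (legitimate, because $k \leq \#(\ngraph{t}\cap S) < n'$ is exactly the loop guard at $Z=S$) continues, from the state $Z = S\cup\ngraph{t}$ onward, exactly as an execution on input $S\cup\ngraph{t}$; the order--independence lemma then identifies the common output with both $\cl{G}{k}{S}$ and $\cl{G}{k}{S\cup\ngraph{t}}$. The one point worth stating explicitly, as you note, is that the order--independence lemma is really about the nondeterministic choice of $u$ in the while loop, i.e.\ every execution on a given input produces the same output; that is what its proof in the paper actually establishes, so your invocation is legitimate. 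An alternative, equally short route would use Lemma \ref{lem:closedsetintersectionnum}: $\cl{G}{k}{S}$ is $k$--closed and meets $\ngraph{t}$ in at least $k$ points, hence contains $\ngraph{t}$, so it is a $k$--closed set containing $S\cup\ngraph{t}$ and therefore contains $\cl{G}{k}{S\cup\ngraph{t}}$, while the reverse containment follows from monotonicity of the closure in the initial set; your algorithmic argument avoids having to justify monotonicity and the ``smallest closed superset'' characterization, which the paper never states explicitly.
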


 In the classical definition of an induced subgraph, the induced subgraph $T \subseteq V_1(G)$  would have no edges. For a bipartite graph $G$, and $T \subseteq V_1(G)$  we shall define the induced graph by $T$ as follows.
\begin{defn}
Let $G$ be an $n'$-right regular bipartite graph. Let $T \subseteq V_1(G)$. Define the vertex set $V_2(G)^T := \{ u \in V_2(G) \ | \ \ngraph{u} \subseteq T \}.$ We define \emph{the $n'$--right regular bipartite subgraph induced by $T$} as the triple $$G_T  = (T, V_2(G)^T, E(G_T)),$$ where $E(G_T) = E(G) \cap (T \times V_2(G)^T).$
\end{defn}

\begin{lem}
Let $G$ be an $n'$-right regular bipartite graph. Let $T \subseteq V_1(G)$. Suppose $T$ is $k$--closed. For $S \subseteq T$ we have $$\cl{G}{k}{S} = \cl{G_T}{k}{S}.$$ \label{lem:subgraphforcing}
\end{lem}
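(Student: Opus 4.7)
The plan is to prove the two inclusions separately. Throughout, note that for $u \in V_2(G)^T$ the neighborhood of $u$ in $G_T$ is $\ngraph{u} \cap T$, which equals $\ngraph{u}$ since $\ngraph{u} \subseteq T$ by the definition of $V_2(G)^T$; so $G_T$ is genuinely $n'$-right regular and the $k$-threshold condition at $u$ is identical in $G$ and in $G_T$.

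For $\cl{G_T}{k}{S} \subseteq \cl{G}{k}{S}$ the argument is routine: any execution of Algorithm \ref{alg:1} on $G_T$ with input $S$ is also a valid execution on $G$, because each triggering vertex $u \in V_2(G)^T$ is a vertex of $V_2(G)$ and satisfies the same inequality $k \leq \#(\ngraph{u} \cap Z) < n'$ in both graphs.

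The harder inclusion, and the main content of the lemma, is $\cl{G}{k}{S} \subseteq \cl{G_T}{k}{S}$. Here I would proceed by induction on the step $i$ of a fixed execution of Algorithm \ref{alg:1} on $G$ with input $S$, where $Z_i$ denotes the set $Z$ after the $i$-th addition. The invariants to maintain are (a) $Z_i \subseteq T$ and (b) the same execution is realizable on $G_T$. The base case $i=0$ is immediate since $S \subseteq T$. For the induction step, suppose $Z_i \subseteq T$ and the algorithm on $G$ adds $\ngraph{u_{i+1}}$ because $k \leq \#(\ngraph{u_{i+1}} \cap Z_i) < n'$. Then $\#(\ngraph{u_{i+1}} \cap T) \geq \#(\ngraph{u_{i+1}} \cap Z_i) \geq k$, and since $T$ is $k$-closed, Lemma \ref{lem:closedsetintersectionnum} forces $\ngraph{u_{i+1}} \subseteq T$. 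This yields $u_{i+1} \in V_2(G)^T$ and $Z_{i+1} = Z_i \cup \ngraph{u_{i+1}} \subseteq T$, which establishes (a). For (b), the same vertex $u_{i+1}$ is available in $G_T$ with the same neighborhood, and the threshold inequality is unchanged, so the identical step is valid in $G_T$.

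The crux is the use of Lemma \ref{lem:closedsetintersectionnum} to rule out the alternative $k \leq \#(\ngraph{u_{i+1}} \cap T) < n'$ with $\ngraph{u_{i+1}} \not\subseteq T$; without the $k$-closedness of $T$ this would fail, and the execution in $G$ could escape $T$. Once both invariants are in place for all $i$, we obtain $\cl{G}{k}{S} = Z_a \subseteq \cl{G_T}{k}{S}$, and combining with the first inclusion finishes the proof.
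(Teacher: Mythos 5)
Your proof is correct and takes essentially the same route as the paper: both hinge on the observation that neighborhoods of vertices in $V_2(G)^T$ agree in $G$ and $G_T$, and on the $k$--closedness of $T$ keeping the whole process inside $T$ so that every triggering constraint vertex actually lies in $V_2(G)^T$. Your induction on the run of Algorithm \ref{alg:1}, using Lemma \ref{lem:closedsetintersectionnum} at each step, simply makes explicit what the paper argues more tersely via Lemma \ref{lem:closedsubsets}.
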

\begin{proof}
The definition of $V_2(G)^T$ implies that $\ngraph{u} = \{v \in \vgraphone{G} \ | \ (v,u) \in \edgesgraph{G}  \}$ is equal to  $\ngraphT{u} = \{v \in \vgraphone{G_T} \ | \ (v,u) \in \edgesgraph{G_T}  \}$ for  $u \in V_2(G)^T$. As $S \subseteq T$ and $T$ is $k$--closed, Lemma \ref{lem:closedsubsets} implies $\cl{G}{k}{S} \subseteq T$. If there exists $u \in V_2(G)$ such that $\ngraph{u} \subseteq \cl{G}{k}{S}$, then $u \in V_2(G)^T$. Hence, $\ngraph{u} \subseteq \cl{G_T}{k}{S}$. This implies $\cl{G}{k}{S} \subseteq \cl{G_T}{k}{S}$. The set $\ngraphT{u}$ is also a neighborhood in $G$. The containment $\cl{G_T}{k}{S} \subseteq \cl{G}{k}{S}$ is clear. Thus, $$\cl{G}{k}{S} = \cl{G_T}{k}{S}.$$\end{proof}

\section{Encoding Tanner codes and irreversible $k$--threshold processes} 

We consider encoding a Tanner code as an irreversible $k$--threshold process. In this case, we require component codes where the values on any $k$ positions will determine the codeword uniquely (if such a codeword exists). In this case, $k$ may be larger than the dimension of the component code. In this regard, MDS codes are optimal because if a codeword of an MDS code of dimension $k$ has zeroes in any $k$ positions, then this codeword must be zero and there exists a codeword in the MDS codes with any $k$ prescribed values on any set of $k$ positions.

\begin{defn} \cite[Section 1]{ACDP14}
We say $S$ is a $k$--forcing set if $\cl{G}{k}{S} = \vgraphone{G}$.

\end{defn}

This is our generalization of an irreversible conversion set (see \cite{DR09}) and a $k$--forcing set as in \cite{ACDP14}. We are interested in $k$--forcing sets and their relation to encoding Tanner codes.

\begin{lem}Suppose $G$ is an $n'$-right regular bipartite graph and $C'$ is an MDS code of length $n'$ and dimension $k$. For $c \in (G,C')$, the complement of $supp(c)$ is $k$--closed.
\label{lem:zeroeskthreshold}\end{lem}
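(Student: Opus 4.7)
The plan is to characterize $k$-closedness via Lemma \ref{lem:closedsetintersectionnum}, and then use the defining property of MDS codes to rule out the ``bad'' intersection pattern.

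Let $T = V_1(G) \setminus supp(c)$, i.e., the set of positions where $c$ is zero. By Lemma \ref{lem:closedsetintersectionnum}, to show $T$ is $k$-closed it suffices to show that for every $u \in V_2(G)$ one has either $\#(T \cap \mathcal{N}(u)) < k$ or $\mathcal{N}(u) \subseteq T$. Fix such a $u$. Since $c \in (G,C')$, the projection $c^{\mathcal{N}(u)}$, transported via $\phi_u$, lands in the code $C'_u$, which is equivalent to the MDS code $C'$ of length $n'$ and dimension $k$.

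The core observation is the MDS property translated to zero positions: a nonzero codeword of an MDS code of length $n'$ and dimension $k$ has at least $n' - k + 1$ nonzero entries, hence at most $k - 1$ zero entries. Equivalently, if such a codeword has $k$ or more zero positions, it must be the zero codeword. Applying this to $\phi_u(c^{\mathcal{N}(u)}) \in C'_u$: the number of zeros of this vector is exactly $\#(T \cap \mathcal{N}(u))$, since $\phi_u$ is a bijection that preserves the zero/nonzero pattern. So if $\#(T \cap \mathcal{N}(u)) \geq k$, then $\phi_u(c^{\mathcal{N}(u)}) = 0$, and consequently $c_v = 0$ for every $v \in \mathcal{N}(u)$, i.e., $\mathcal{N}(u) \subseteq T$. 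This is precisely the dichotomy required by Lemma \ref{lem:closedsetintersectionnum}, so $T$ is $k$-closed.

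There is no real obstacle here; the only small point to be careful about is verifying that equivalence of codes (as in the definition of a Tanner code) preserves the MDS property and the count of zero coordinates, which it does because code equivalence is by monomial transformations that send zero coordinates to zero coordinates. The proof is essentially a one-line consequence of the singleton-bound characterization of MDS codes combined with the local characterization of $k$-closed sets already established in Lemma \ref{lem:closedsetintersectionnum}.
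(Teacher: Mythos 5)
Your proof is correct and takes essentially the same approach as the paper: both reduce to the characterization in Lemma~\ref{lem:closedsetintersectionnum} and invoke the MDS fact that a codeword with at least $k$ zero coordinates must be zero. (In fact your write-up is cleaner, since the paper's proof appears to contain a typo setting $Z = supp(c)$ where the complement is intended; your explicit definition of $T$ avoids that slip.)
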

\begin{proof} Let $Z = supp(c)$ where $c \in (G,C')$. Suppose $u \in \vgraphtwo{G}$ satisfies $\#(Z \cap \ngraph{u}) \geq k.$ The vector $c^{\ngraph{u}}$ has at least $k$ zero positions. The projection $c^{\ngraph{u}}$ is a codeword of $C'_u$. As $C'_u$ is an MDS code of dimension $k$, $c^{\ngraph{u}}$  is the zero codeword, which implies $\ngraph{u} \subseteq Z.$ Therefore, there exists no $u \in V_2(G)$ such that $k \leq \#(Z \cap \ngraph{u}) < n'.$  Lemma \ref{lem:closedsetintersectionnum} implies $Z$ is $k$--closed. \end{proof}

\begin{thm}
Let $G$ be an $n'$-right regular bipartite graph. Let $C'$ be an MDS code of length $n'$ and dimension $k$. Then, the projection of $(G,C')^{\cl{G}{k}{S}}$ onto $S$ is a linear isomorphism between $(G,C')^{\cl{G}{k}{S}}$ and $(G,C')^{S}.$ 
\label{thm:forcingsetdimension}
\end{thm}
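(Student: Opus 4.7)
The plan is to verify that the projection $\pi\colon (G,C')^{\cl{G}{k}{S}} \to (G,C')^{S}$ is a well defined linear map and then check surjectivity and injectivity separately. Linearity is immediate since projection onto coordinates is linear. Surjectivity follows directly from the definition of projection: any $c' \in (G,C')^{S}$ arises from some $c \in (G,C')$ with $c^S = c'$, and the vector $c^{\cl{G}{k}{S}}$ is a preimage in $(G,C')^{\cl{G}{k}{S}}$ mapping to $c'$ under $\pi$. So the whole content of the theorem is injectivity.

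For injectivity, the key step is the following: if $c \in (G,C')$ satisfies $c^{S} = 0$, then already $c^{\cl{G}{k}{S}} = 0$. To see this, set $Z = V_1(G) \setminus supp(c)$. Lemma \ref{lem:zeroeskthreshold} tells us that $Z$ is $k$--closed. Since $c^{S} = 0$, we have $S \subseteq Z$. Hence the claim reduces to showing that $\cl{G}{k}{S} \subseteq Z$, i.e. that $\cl{G}{k}{S}$ is contained in every $k$--closed set containing $S$.

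That minimality property is what I would establish next, either as a short standalone observation or by inspecting Algorithm \ref{alg:1}. Let $T$ be any $k$--closed set with $S \subseteq T$. Inductively, the intermediate sets $Z_i$ produced by Algorithm \ref{alg:1} on input $S$ stay inside $T$: if $Z_i \subseteq T$ and some $u \in V_2(G)$ satisfies $k \leq \#(\ngraph{u} \cap Z_i) < n'$, then $\#(\ngraph{u} \cap T) \geq k$, so Lemma \ref{lem:closedsetintersectionnum} forces $\ngraph{u} \subseteq T$, hence $Z_{i+1} = Z_i \cup \ngraph{u} \subseteq T$. Taking $T = Z = V_1(G) \setminus supp(c)$ yields $\cl{G}{k}{S} \cap supp(c) = \emptyset$, which is exactly $c^{\cl{G}{k}{S}} = 0$, establishing injectivity.

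The main obstacle, if any, is really only bookkeeping: confirming the minimality of the $k$--closure, which is essentially just reading Algorithm \ref{alg:1} through Lemma \ref{lem:closedsetintersectionnum}. Once that is in hand, Lemma \ref{lem:zeroeskthreshold} does all of the actual work, since it encodes the MDS hypothesis on $C'$ that forces any codeword with $k$ zeros on a neighborhood to vanish on the whole neighborhood.
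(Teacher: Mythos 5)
Your proof is correct and follows essentially the same route as the paper: surjectivity is immediate, and injectivity comes from Lemma \ref{lem:zeroeskthreshold}, which makes the complement of the support of a codeword a $k$--closed set containing $S$. The only difference is that you spell out the minimality of $\cl{G}{k}{S}$ among $k$--closed sets containing $S$ (via Lemma \ref{lem:closedsetintersectionnum} and the steps of Algorithm \ref{alg:1}), a step the paper uses implicitly when it passes from $c_s=0$ on $S$ to $c_s=0$ on $\cl{G}{k}{S}$.
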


We interpret Theorem \ref{thm:forcingsetdimension} as an irreversible, $k$--threshold process. In this theorem, a codeword has only two types of positions: undetermined positions and positions determined to be zero. We consider an undetermined position to be in state $0$ and a zero position is in state $1$.

\begin{proof} Suppose $c \in (G,C')^{\cl{G}{k}{S}}$ is in the kernel of the projection of $(G,C')^{\cl{G}{k}{S}}$ onto $S$. Therefore, $c_s  = 0 $ for $s \in S$. Lemma \ref{lem:zeroeskthreshold} implies $c_s = 0 $ for $s \in \cl{G}{k}{S}$. Therefore, $c$ is the zero codeword of $(G,C')^{\cl{G}{k}{S}}$. Thus, the kernel of the projection $(G,C')^{\cl{G}{k}{S}}$ onto $(G,C')^S$ is trivial and $(G,C')^{S}$ is linearly isomorphic to $(G,C')^{\cl{G}{k}{S}}.$\end{proof}

Theorem \ref{thm:forcingsetdimension} states that if the positions in $S$ of $c \in (G, C')$ are determined to be zero, then the positions in $\cl{G}{k}{S}$ are also zero. Later,  we shall extend this to an encoding function. First, we obtain an upper bound on $\dim (G, C')$.

\begin{cor}

Let $G$ be an $n'$-right regular bipartite graph. Let $C'$ be an MDS code of length $n'$ and dimension $k$ and $S$ be a $k$-forcing set. Then, $\dim (G,C') \leq \# S$.
\label{cor:forcingsetdimension2}
\end{cor}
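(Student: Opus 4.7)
The plan is to recognize this as a direct application of Theorem \ref{thm:forcingsetdimension} combined with the definition of a $k$--forcing set. By hypothesis, $S$ is a $k$--forcing set, which by definition means $\cl{G}{k}{S} = \vgraphone{G}$. Consequently the projection $(G,C')^{\cl{G}{k}{S}}$ is nothing other than $(G,C')$ itself, so any bound on the dimension of the projection onto $S$ translates directly into a bound on $\dim (G,C')$.

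Concretely, I would first invoke Theorem \ref{thm:forcingsetdimension} to obtain that the projection map $(G,C')^{\cl{G}{k}{S}} \to (G,C')^S$ is a linear isomorphism. Substituting $\cl{G}{k}{S} = \vgraphone{G}$ gives an isomorphism $(G,C') \cong (G,C')^S$, hence $\dim (G,C') = \dim (G,C')^S$. Since $(G,C')^S$ is a subspace of $\ff{q}^S$, which has dimension $\# S$, the bound $\dim (G,C') \leq \# S$ follows immediately.

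There is essentially no obstacle in this argument; the corollary is a straightforward unpacking of the preceding theorem under the additional hypothesis that $S$ forces all of $\vgraphone{G}$. The only point worth emphasizing in the write--up is the identification $(G,C')^{\vgraphone{G}} = (G,C')$, which is just the observation that projecting onto every coordinate is the identity map on the code.
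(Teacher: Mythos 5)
Your proposal is correct and matches the paper's proof: both invoke Theorem \ref{thm:forcingsetdimension} to get the isomorphism $(G,C')^{\cl{G}{k}{S}} \cong (G,C')^S$, use the $k$--forcing hypothesis to identify $\cl{G}{k}{S}$ with $\vgraphone{G}$, and then bound $\dim (G,C')^S$ by $\# S$. There is no substantive difference from the paper's argument.
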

\begin{proof}
Theorem \ref{thm:forcingsetdimension} implies $$\dim (G,C')^{S} = \dim (G,C')^{\cl{G}{k}{S}}.$$ As $S$ is a $k$--forcing set,  we have $$\dim (G,C') = \dim (G,C')^S$$ which implies $\dim (G,C') \leq\# S.$\end{proof}

\begin{cor}

Let $G$ be an $n'$-right regular bipartite graph. Let $C'$ be an MDS code of length $n'$ and dimension $k$ and $S$ be a $k$-forcing set. Then, $S$ contains an information set for $(G,C)$.
\end{cor}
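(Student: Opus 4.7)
The plan is to leverage Theorem \ref{thm:forcingsetdimension} (applied with $S$ being a $k$-forcing set, so $\cl{G}{k}{S} = V_1(G)$) to conclude that projecting $(G,C')$ onto $S$ is a linear isomorphism. From this equality of dimensions, finding an information set for $(G,C')$ inside $S$ reduces to finding an information set for the shorter code $(G,C')^{S}$ inside its coordinate set $S$, which is a standard linear algebra fact.

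First I would observe that because $S$ is $k$-forcing, $\cl{G}{k}{S} = V_1(G)$, so $(G,C')^{\cl{G}{k}{S}} = (G,C')$. Applying Theorem \ref{thm:forcingsetdimension} then gives a linear isomorphism between $(G,C')$ and $(G,C')^{S}$ induced by the projection map; in particular, $\dim (G,C')^{S} = \dim (G,C')$, and we write $d := \dim (G,C')$.

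Next I would find an information set for the code $(G,C')^{S} \subseteq \ff{q}^{S}$ inside $S$. Let $M$ be a generator matrix of $(G,C')^{S}$; since its row rank equals $d$, its column rank also equals $d$, so there exist $d$ columns that are linearly independent. Let $B \subseteq S$ index these columns. Then the restricted generator matrix has full rank $d$, so $((G,C')^{S})^{B} = \ff{q}^{B}$, with $\#B = d$.

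Finally I would note that projections compose: $((G,C')^{S})^{B} = (G,C')^{B}$ since $B \subseteq S \subseteq V_1(G)$. Therefore $(G,C')^{B} = \ff{q}^{B}$ and $\#B = d = \dim (G,C')$, so by the definition of information set, $B$ is an information set for $(G,C')$ contained in $S$. The main (very mild) obstacle is simply citing the right mix of prior results; the substantive content is Theorem \ref{thm:forcingsetdimension}, and after that the argument is purely standard.
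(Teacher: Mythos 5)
Your proof is correct and follows essentially the same route as the paper: both rely on Theorem \ref{thm:forcingsetdimension} (the paper via Corollary \ref{cor:forcingsetdimension2}) to conclude that projection onto $S$ is a linear isomorphism, whence $\dim (G,C')^S = \dim(G,C')$. You simply spell out the final standard linear-algebra step of picking an information set of $(G,C')^S$ inside $S$, which the paper leaves implicit.
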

\begin{proof} Corollary \ref{cor:forcingsetdimension2} implies the projection onto $S$ is a code isomorphism. Hence, $S$ contains an information set for $(G, C')$. \end{proof}

If projecting a code $C \subseteq \ff{q}^A$ onto $B$ is a linear isomorphism, then the inverse map from $C^B$ to $C$ is a \emph{lengthening of $C^B$}. Now we define an iterative lengthening algorithm.
\begin{defn}

Let $G$ be an $n'$-right regular bipartite graph. Let $C'$ be an MDS code of length $n'$ and dimension $k$. Suppose $S$ be a $k$-forcing set. An \emph{iterative encoder} is a map which maps a vector $c^S \in (G,C')^S$ to its lengthened codeword $c \in (G,C').$ That is the inverse of the projection map of $(G,C')$ onto $S$.
\end{defn}

\begin{defn} For $C' \leq \ff{q}^{\mathcal{N}'}$ and vector $m \in \ff{q}^B$ where $B \subseteq \mathcal{N}'$, and $u \in \mathcal{N}'$ and $B$ contains an information set for $C'$  we define $PBIT(m, C', B, u)$ as the algorithm which returns the value at position $u$ of the unique codeword $c \in C'$ whose projection on $B$ is $m$. If such a codeword does not exist, then the algorithm fails.

For $x \in \ff{q}^{A}$ and $y \in \ff{q}^B$ we define the function $EXT(x,A, y, B)$ as the unique vector, if it exists $z \in \ff{q}^{A \cup B}$ whose projections satisfy $z^A =x$ and $z^B = y$.

\end{defn}

\begin{algorithm}
\begin{algorithmic}[5]
\STATE Input: $S \subseteq V_1(G)$, $m \in (G,C')^S$. 
\STATE $m' := m$.
\STATE $Z := S$  
\IF{$\exists u \in V_2(G) \ : \ \dim(C') \leq \# (\ngraph{u} \cap Z)  < n'$}
\STATE Set $T := \ngraph{u}$
\FOR{ $v \in T \setminus (T \cap Z)$ }
\STATE $m' := EXT(m', Z,  PBIT(m'^{T \cap Z}, C'_{u}, T \cap Z, v), v)$
\STATE $Z := Z \cup \{v\}$
\ENDFOR
\ENDIF
\STATE Return $m'$.

\end{algorithmic}
\caption{ $ENCODE(S, m)$: an iterative lengthening algorithm of $(G, C')^S$ to $(G,C)^{\cl{G}{k}{S}}$}
\label{alg:2}
\end{algorithm}

\begin{thm}
Let $G$ be an $n'$--right regular graph and $C'$ an MDS code of length $n'$. Suppose $S \subseteq V_1(G)$ and $m \in (G,C')^S$ are given as inputs to Algorithm \ref{alg:2}. Then Algorithm \ref{alg:2} outputs the unique codeword $c \in (G,C')^{\cl{G}{k}{S}}$ such that $c^S = m$.
\end{thm}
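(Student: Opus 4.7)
The plan is to split the statement into two parts: uniqueness of the target codeword, and correctness of Algorithm~\ref{alg:2}. Uniqueness is immediate from Theorem~\ref{thm:forcingsetdimension}: the projection of $(G,C')^{\cl{G}{k}{S}}$ onto $S$ is a linear isomorphism, so there is exactly one codeword $c \in (G,C')^{\cl{G}{k}{S}}$ with $c^S = m$. It remains to show that Algorithm~\ref{alg:2} returns this $c$.

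I would prove correctness by induction on the number of updates to $Z$, maintaining the loop invariant: at the start of each iteration, $Z \subseteq \cl{G}{k}{S}$ and $m' = c^Z$. The base case is immediate since $Z=S$ and $m'=m=c^S$. For the inductive step, suppose the algorithm selects $u \in V_2(G)$ with $k \leq \#(\ngraph{u} \cap Z) < n'$ and sets $T = \ngraph{u}$. Because $C'_u$ is MDS of dimension $k$, any $k$ positions of $C'_u$ form an information set; in particular $\phi_u(T \cap Z)$ contains an information set of $C'_u$, so $PARITYBIT$ is well defined and returns the value at $v$ of the unique codeword of $C'_u$ whose restriction to $\phi_u(T \cap Z)$ equals $\phi_u(m'^{T \cap Z})$. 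The inductive hypothesis gives $c^{T\cap Z}=m'^{T\cap Z}$, and the Tanner constraint $\phi_u(c^T) \in C'_u$ forces this unique extension to be $c^T$ itself. Hence $PARITYBIT$ returns $c^v$ for each $v \in T \setminus Z$, and after the inner loop we still have $m' = c^Z$ with the updated set $Z \cup T$, which remains in $\cl{G}{k}{S}$ by Lemma~\ref{lem:closedsubsets}.

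Since $Z$ in Algorithm~\ref{alg:2} grows exactly as in Algorithm~\ref{alg:1}, the process terminates after finitely many iterations (as $V_2(G)$ is finite and $Z$ strictly grows at each iteration). Once no $u \in V_2(G)$ satisfies $k \leq \#(\ngraph{u} \cap Z) < n'$, Lemma~\ref{lem:closedsetintersectionnum} tells us $Z$ is $k$--closed, and since $S \subseteq Z \subseteq \cl{G}{k}{S}$ with $Z$ $k$--closed, we must have $Z = \cl{G}{k}{S}$. Combined with the invariant $m' = c^Z$, the output is precisely the required codeword.

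The main obstacle I anticipate is the inductive step: one must check both that $T \cap Z$ contains an information set of $C'_u$ (so that $PARITYBIT$ is unambiguous) \emph{and} that its unique extension matches $c^T$. Both points rely crucially on the MDS property of $C'$ together with the Tanner constraint $\phi_u(c^{\ngraph{u}})\in C'_u$, so one should emphasize that the dimension of the component code equals the threshold $k$ driving the irreversible process, which is exactly the compatibility condition that makes the iterative encoding well-posed.
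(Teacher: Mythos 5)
Your proposal is correct and follows essentially the same route as the paper: an inductive/iterative argument in which the MDS property of the component code guarantees that each parity bit filled in by $PARITYBIT$ agrees with the unique target codeword, and the closure lemmas guarantee that $Z$ terminates at $\cl{G}{k}{S}$. The only differences are bookkeeping — you use a loop invariant plus a termination argument where the paper inducts on $\#(\cl{G}{k}{S}\setminus S)$ and invokes Lemma~\ref{lem:closedsubsets} — and your explicit treatment of uniqueness via Theorem~\ref{thm:forcingsetdimension} and of the well-definedness of $PARITYBIT$ is, if anything, slightly more careful than the paper's.
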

\begin{proof}
Suppose $c \in (G,C')^{\cl{G}{k}{S}}$.  We shall prove that if $c^S$ is given as input to  Algorithm \ref{alg:2} then $c$ will be the output. The vector $m'$ represents the codeword to be encoded and $S$ represents the known positions of the codeword.  We initialize $m'$ to the value $m$ and $Z$ is initialized to $S$.

We proceed by induction on $\#(\cl{G}{k}{S} \setminus S).$ Clearly, if $\cl{G}{k}{S} = S$, then the IF condition on line 4 is never satisfied, and the algorithm outputs $m'$ which was initially set to $c$

Now, suppose $\cl{G}{k}{S} \neq S$. Initially $m'$ equals the projection $c^S$. By the induction hypothesis we suppose the statement is true for $S'$  where $\#(\cl{G}{k}{S'} \setminus S') < \#(\cl{G}{k}{S} \setminus S).$ In this case, there exists some $u \in \vgraphtwo{G}$ such that $k \leq \# (\ngraph{u} \cap Z)  < n'.$ For each position $v \in T \setminus (T \cap Z)$, the algorithm determines the parity bit on that position given the values on $T \cap Z$. The codeword $m'$ is extended by this parity check bit on $v$. As $C'$ is an MDS code, each parity check bit is uniquely determined. Corollary \ref{cor:forcingsetdimension2} implies $m'$ is now the projection $c^{S \cup T}$. Likewise, $Z$ is updated to $S \cup T$. 

The next iteration of the algorithm is equivalent to invoking the algorithm with $c^{S \cup T}$ and  $S \cup T$. As $S \subsetneq S \cup T$ and $\cl{G}{k}{S} = \cl{G}{k}{S \cup T}$ the output is $c$ and $\cl{G}{k}{S}$ \end{proof}

We have reduced encoding $(G,C')$ to finding codewords in $(G,C')^S$. If $S \subseteq V_1(G)$ satisfies $\cl{G}{k}{S} = V_1(G)$ and $S$ is an information set for $(G,C')$ ,then we have an iterative and systematic encoder for $(G,C')$. This will be important for Schubert union codes.

\section{Schubert unions and related linear codes}

Here we present the Grassmannian, Schubert varieties and Schubert unions. All statements and definitions in this section are previously known. There are several references on the Grassmannian and Schubert varieties. We mention \cite{KL72} and \cite{S11}. First, we present the algebraic and geometric subspaces of the Grassmannian as in \cite{S11}. The relation among geometric and algebraic spaces was studied in \cite{S90}. Then we introduce the linear codes associated to the Grassmannian, Schubert varieties and Schubert unions. These linear codes are a standard technique to study the algebraic and geometric subspaces of any projective system with coding theory. For more on coding theory and projective systems one may consult \cite{NTV07}. For the remainder of this article, we fix $\ell$ and $m$ integers with $1 \leq \ell \leq m$.

\begin{defn}
 \emph{The Grassmannian}, $\grassmann{\ell}{m}$, is the set of all $\ell$ dimensional $\ff{q}$-linear subspaces of $\ff{q}^m$.
\label{defn:Grassmanndefn}
\end{defn}

Definition \ref{defn:Grassmanndefn} is an overly simplified definition of the Grassmannian. The classical way to study the Grassmannian is to focus on the Pl\"ucker relations over the algebraic closure of $\ff{q}$. However, we are working on linear codes of finite length. Thus, it is more useful to us to define the Grassmannian manifold as the solutions over $\ff{q}$ to the Pl\"ucker relations. Hence, we work only with the $\ff{q}$--rational points instead. This definition is equivalent.  We shall focus on the incidence geometry between the Grassmannian over $\ff{q}$ and its lines.

\begin{defn}

We define $$I(\ell, m) := \{  \alpha = (\alpha_1 < \alpha_2 < \ldots < \alpha_\ell) \in \mathbb{Z}^\ell \}$$

where $\{\alpha_1, \alpha_2, \ldots, \alpha_\ell\} \subseteq \{1,2,\dots, m\}$.
\end{defn}

The $\ell$--tuple $\alpha = (\alpha_1 < \alpha_2 < \cdots <\alpha_\ell)$ represents the set $\{ \alpha_i \ | \ 1 \leq i \leq \ell\}.$ We consider $I(\ell, m)$ as the set of all subsets of $\{1,2,\ldots, m\}$ of size $\ell$. 

\begin{defn}

Let $m$ be an integer. Suppose $\ell \leq m$. For $W \in \grassmann{\ell}{m}$ pick an $\ell \times m$ matrix whose rowspace is $W$. Denote this matrix by $\vectorname{M}_W$.  The map
\begin{center}
\begin{tabular}{rrcl}
$ev:$ &  $\grassmann{\ell}{m}$ & $\rightarrow$ &  $\projectivespace{\binom{m}{\ell}-1}{\ff{q}}$ \\  $ev:$ & $W$ & $\mapsto$  & $(\mathrm{det}_{I}(\vectorname{M}_{W}))_{I \in I(\ell, m)}$\\
\end{tabular}
\end{center}  is known as the \emph{Pl\"ucker embedding}.

\end{defn}

The Pl\"ucker embedding is a nondegenerate embedding of $\grassmann{\ell}{m}$ into $\projectivespace{\binom{m}{\ell}-1}{\ff{q}}$. The image of $\grassmann{\ell}{m}$ is known as the \emph{Grassmann variety}. The Grassmann variety has highly desirable algebraic and geometric properties. 

\begin{defn}\cite{S90} Let $A$ be a linear subspace of $\ff{q}^{I(\ell, m)}$ of dimension $r$. An \emph{algebraic subspace of $\grassmann{\ell}{m}$} is a subset of $\grassmann{\ell}{m}$ of the form $$\{ W \in \grassmann{\ell}{m} \ | \ \sum_{I \in I(\ell, m)} a_I \mathrm{det}_I(\vectorname{M}_{W}) = 0 \ \forall \ a \in A \}$$ or equivalently $\{ W \in \grassmann{\ell}{m} \ | \ ev(W) \in A^\perp \}$ where $A^\perp$ is the orthogonal complement of $A$. 

If $A$ has dimension $1$, then the algebraic subspace defined by $A$ is an \emph{algebraic hyperplane of $\grassmann{\ell}{m}$}. \end{defn}

\begin{defn}\cite[Chapter 3, Example 6]{S11} A \emph{line of $\grassmann{\ell}{m}$} is a set of the form $$\pi_{Z}^{Z'} := \{  V \in \grassmann{\ell}{m} \ | \ Z \subseteq V \subseteq Z'\},$$ where $Z \in \grassmann{\ell-1}{m}$, $Z' \in \grassmann{\ell+1}{m}$ and $Z \subseteq Z'$. We denote by $\lines(\grassmann{\ell}{m})$ the set of all lines of the Grassmannian.\end{defn}

The lines $ev(\pi_Z^{Z'}) \subseteq \projectivespace{\binom{m}{\ell}-1}{\ff{q}}$ are the lines of $\projectivespace{\binom{m}{\ell}-1}{\ff{q}}$ contained in $ev(\grassmann{\ell}{m})$. The geometry of the Grassmannian can be defined from the incidence geometry from the elements of the Grassmannian and the lines of the Grassmannian.

\begin{defn}\cite[Section 3.4]{S11} A \emph{geometric subspace of the Grassmannian} is a subset $X \subseteq \grassmann{\ell}{m}$ such that any line of the Grassmannian  has either $0$, $1$ or $q+1$ points in $X$.

\cite[Section 4.1.2]{S11} A \emph{geometric hyperplane of the Grassmannian} is a subset $X \subseteq \grassmann{\ell}{m}$ such that any line of the Grassmannian either has either $1$ or $q+1$ points in $X$.
\label{defn:geometricsubspaces}
\end{defn}
What we call a \emph{geometric subspace}, E. Shult, in \cite{S11} has called a \emph{subspace}. Shult's original statement of Definition \ref{defn:geometricsubspaces} also applies to infinite fields. The key result of \cite{S90} is that algebraic and geometric subspaces are equivalent.

\begin{prop}\cite[Theorem 2]{S90} Let $X \subseteq \grassmann{\ell}{m}$. The set $X$ is an algebraic subspace of the Grassmannian if and only if $X$ is a geometric subspace of the Grassmannian. The set $X$ is an algebraic hyperplane of the Grassmannian if and only if $X$ is a geometric hyperplane of the Grassmannian.
\label{prof:geomsubspace} \end{prop}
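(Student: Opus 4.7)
The plan is to prove each of the two biconditionals in Proposition \ref{prof:geomsubspace} by handling the implications separately. The direction ``algebraic implies geometric'' is short: if $X$ is algebraic, then $ev(X) = ev(\grassmann{\ell}{m}) \cap \Pi$ for some projective linear subspace $\Pi \subseteq \projectivespace{\binom{m}{\ell}-1}{\ff{q}}$. For any Grassmannian line $\pi_Z^{Z'}$, picking a basis $z_1, \ldots, z_{\ell-1}$ of $Z$ and vectors $u,w$ extending it to a basis of $Z'$ shows that the points of $\pi_Z^{Z'}$ are $\langle z_1, \ldots, z_{\ell-1}, au+bw \rangle$ for $(a:b) \in \projectivespace{1}{\ff{q}}$. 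Multilinearity of the determinant in the last row then makes $ev(\pi_Z^{Z'})$ a projective line in $\projectivespace{\binom{m}{\ell}-1}{\ff{q}}$. A projective line over $\ff{q}$ meets $\Pi$ either in $\emptyset$, in a single point, or in the whole line ($q+1$ points), which is exactly the geometric property. The hyperplane version specializes: $\Pi$ of codimension $1$ meets every projective line, so the intersection size is $1$ or $q+1$.

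For the reverse direction, let $X$ be geometric, set $\Pi := \langle ev(X) \rangle$ as the projective linear span, and let $Y := \{W \in \grassmann{\ell}{m} : ev(W) \in \Pi\}$. Then $Y$ is algebraic, contains $X$, and by the forward direction is also geometric. The goal is to show $Y = X$. The key lemma is that a Grassmannian line containing two points of $Y$ has its entire Plücker image in $\Pi$, since the Plücker image is itself a projective line and is determined by any two of its points. Assuming for contradiction that some $W_0 \in Y \setminus X$ exists, the strategy is to use the line-connectivity of $\grassmann{\ell}{m}$: there is a chain of Grassmannian lines, each contained in $Y$, joining $W_0$ to a point of $X$. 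Somewhere along this chain one tries to produce a line $\pi_Z^{Z'} \subseteq Y$ meeting $X$ in a number of points strictly between $1$ and $q+1$, contradicting the geometric property of $X$. The main obstacle is that intermediate lines may individually meet $X$ in only $0$ or $1$ points, so the desired contradiction must be extracted from the global combinatorial structure rather than from any single line. Shult's proof in \cite[Theorem 2]{S90} proceeds by induction on $\dim \Pi$ and exploits the fact that the residue of $\grassmann{\ell}{m}$ at a point decomposes into smaller Grassmannians of types $\grassmann{\ell-1}{m-1}$ and $\grassmann{\ell+1}{m-1}$, permitting a recursive descent.

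For the hyperplane biconditional, the subspace equivalence already gives $X = \{W \in \grassmann{\ell}{m} : ev(W) \in V\}$ for some projective subspace $V$. The geometric hyperplane condition forces every line $ev(\pi_Z^{Z'})$ to meet $V$. If $V$ had codimension $\geq 2$, then because the family of Grassmannian lines is rich enough within $\projectivespace{\binom{m}{\ell}-1}{\ff{q}}$, one would exhibit a Grassmannian line whose Plücker image avoids $V$, a contradiction. Hence $V$ is a projective hyperplane and $X$ is an algebraic hyperplane. Establishing this ``richness'' of the family of Grassmannian lines over $\ff{q}$ is the second technical step that must be addressed carefully in a self-contained proof.
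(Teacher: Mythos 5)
The paper itself offers no proof of this proposition: it is imported directly from Shult \cite{S90} (Theorem 2), so the only question is whether your attempt actually constitutes an independent proof. It does not. The forward implication (algebraic $\Rightarrow$ geometric) is correct and elementary --- your multilinearity-in-the-last-row argument is essentially the same computation the paper carries out later to show that the Pl\"ucker embedding sends lines of $\grassmann{\ell}{m}$ to projective lines, and intersecting a projective line with a linear subspace indeed gives $0$, $1$ or $q+1$ points. But the converse, which is the entire content of Shult's theorem, is not established. Your chain-of-lines strategy stalls exactly where you admit it does: the intermediate lines joining $W_0 \in Y \setminus X$ to $X$ may each meet $X$ in $0$ or $1$ points, and nothing in the outline produces a line of the Grassmannian meeting $X$ in strictly between $1$ and $q+1$ points, so no contradiction with the geometric property of $X$ is ever derived. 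At that point you fall back on ``Shult's proof proceeds by induction on $\dim \Pi$ using the residue decomposition,'' which is a citation of the result you are supposed to be proving, not an argument.

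The hyperplane direction has the same character: the step ``if $V$ has codimension $\geq 2$ then some Grassmannian line has Pl\"ucker image disjoint from $V$'' is asserted, and you yourself flag that this ``richness'' of the family of lines over $\ff{q}$ still needs to be addressed; moreover it is invoked on top of the subspace equivalence whose hard half is precisely the part left unproven. So the proposal proves only the easy half of each biconditional and replaces the deep half (every geometric subspace/hyperplane of the embedded Grassmannian is a linear section) with a plan plus an appeal to \cite{S90}. Given how substantial Shult's induction is, the honest options are either to cite the result --- as the paper does --- or to genuinely carry out the residue/induction argument; the present write-up does neither and therefore has a real gap at the heart of the statement.
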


We state two properties of geometric spaces and lines which we will need for Schubert varieties and Schubert unions.

\begin{defn}\cite[Section 3.4.1]{S11} Let $X$ be a geometric subspace of $\grassmann{\ell}{m}$, A \emph{line of $X$} is a line of $\grassmann{\ell}{m}$ which is contained in $X$. We denote the set of all such lines by $\lines(X)$.
\end{defn}

\begin{prop}\cite[Chapter 3]{S11} The intersection of two geometric subspaces is a geometric subspace. For any two subsets $A, B \subseteq \grassmann{\ell}{m}$, there exists an unique smallest geometric subspace containing $A \cup B$. 

\end{prop}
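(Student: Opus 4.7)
The plan is to argue both claims via the same line-by-line intersection analysis that underlies the definition of a geometric subspace. Recall that every line $\pi \in \lines(\grassmann{\ell}{m})$ has exactly $q+1$ points, and that $X \subseteq \grassmann{\ell}{m}$ is a geometric subspace precisely when, for every such $\pi$, the cardinality $\#(\pi \cap X)$ lies in $\{0,1,q+1\}$.

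For the first assertion, let $X$ and $Y$ be geometric subspaces and fix a line $\pi$. I would simply run through the finite list of possibilities for the pair $(\#(\pi\cap X),\#(\pi\cap Y))\in\{0,1,q+1\}^2$. If either intersection is empty, then $\#(\pi\cap X\cap Y)=0$. If both equal $q+1$, then $\pi\subseteq X\cap Y$, giving $q+1$. In every remaining case at least one of $\#(\pi\cap X)$ or $\#(\pi\cap Y)$ equals $1$, so $\#(\pi\cap X\cap Y)\in\{0,1\}$. In all cases the intersection number lies in $\{0,1,q+1\}$, so $X\cap Y$ is a geometric subspace.

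For the second assertion, the natural candidate is
\[
\bigcap\{\,X\subseteq\grassmann{\ell}{m} \;:\; X \text{ is a geometric subspace and } A\cup B\subseteq X\,\}.
\]
This collection is nonempty because $\grassmann{\ell}{m}$ itself is trivially a geometric subspace (every line has $q+1$ points in it) containing $A\cup B$. I would then extend the case analysis of the first part from pairs to arbitrary families $\{X_i\}_{i\in I}$: for a fixed line $\pi$, either $\pi\subseteq X_i$ for every $i$ (giving $\pi\subseteq\bigcap_i X_i$ and hence intersection number $q+1$), or some $X_{i_0}$ satisfies $\#(\pi\cap X_{i_0})\in\{0,1\}$, which forces $\#(\pi\cap\bigcap_i X_i)\in\{0,1\}$. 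Thus arbitrary intersections of geometric subspaces are again geometric subspaces, and the displayed intersection is a geometric subspace containing $A\cup B$ that is, by construction, contained in every other such subspace; uniqueness is then automatic.

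There is essentially no technical obstacle here: the only point to handle carefully is checking that the trichotomy $\{0,1,q+1\}$ is closed under the ``intersect with another geometric subspace'' operation, and making explicit that the Grassmannian itself supplies at least one geometric subspace containing $A\cup B$ so that the defining intersection is over a nonempty family.
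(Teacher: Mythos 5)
Your proof is correct. Note, however, that the paper does not prove this proposition at all: it is stated with a citation to Shult \cite[Chapter 3]{S11} and the section explicitly says all its statements are previously known, so there is no in-paper argument to compare against. Your route is the standard one and it is sound: the line-by-line trichotomy check shows that the family of geometric subspaces is closed under (pairwise and, as you observe, arbitrary) intersections, the whole Grassmannian is itself a geometric subspace, and hence the intersection of all geometric subspaces containing $A \cup B$ exists and is the unique smallest one. The only stylistic remark is that your pairwise case split can be compressed to the single observation that if $\pi \subseteq X$ and $\pi \subseteq Y$ the intersection number is $q+1$, and otherwise one of the factors already meets $\pi$ in at most one point, so the intersection does too. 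It is worth pointing out that later in the paper the second assertion effectively gets an alternative, constructive treatment: Lemma \ref{lem:2forcinggeom} identifies geometric subspaces with $2$--closed sets for the point--line incidence graph $\Gamma$, and the subsequent lemma shows that the smallest geometric subspace containing $X$ is exactly $\cl{\Gamma}{2}{X}$, i.e.\ it is produced by the irreversible $2$--threshold process rather than by intersecting all geometric subspaces containing $X$. Your argument is non-constructive but more elementary and self-contained; the paper's closure-operator viewpoint buys an algorithmic description of the same object, which is what drives the encoding results for Schubert union codes.
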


Some interesting subgraphs arise from the incidence between points of a geometric subspace $X$ and its lines $\lines(X)$. We shall study some Tanner codes made from these graphs.

\begin{defn}\cite[Section 3.4.1]{S11} We define the point--line incidence graph of $\grassmann{\ell}{m}$ as the bipartite graph $\Gamma = (\grassmann{\ell}{m}, \lines(\grassmann{\ell}{m}), E(\Gamma))$ where the edge set $\edgesgraph{\Gamma} := \{ (W, L) \in \grassmann{\ell}{m} \times \lines(\grassmann{\ell}{m})  \ | \  W \in L  \}.$ 

Let $X$ be a geometric subspace of $\grassmann{\ell}{m}$. We define the point--line incidence subgraph of $X$ as $\Gamma_X =  (X, \lines(X), E(\Gamma_X))$ where the edge set $\edgesgraph{\Gamma_X} := \{ (W,L) \in X \times \lines(X)  \ | \  W \in L  \}.$ \label{defn:geometrygraphsubspace}\end{defn}

The original definition of a Schubert variety is  given by $\{ W \in \grassmann{\ell}{m} \ | \  \dim (W \cap A_i) \geq i \}$, where $A_{1} \subsetneq A_2 \subsetneq \cdots \subsetneq A_\ell \subseteq \ff{q}^m$. In \cite{KL72} Schubert varieties were determined to be algebraic subspaces. We use this algebraic description. The following partial order on $I(\ell, m)$ is needed to define Schubert varieties.

\begin{defn}
Let $\alpha, \beta \in I(\ell,m)$. The \emph{Bruhat order} is the partial order on $I(\ell, m)$ defined by

$$\alpha \leq \beta \makebox{ if and only if } \alpha_i \leq \beta_i \makebox{ for } i=1,2,\ldots, \ell.$$

\label{def:SchubertOrder}

A subset $S \subseteq I(\ell, m)$ is said to be \emph{downward closed} if $\alpha \leq \beta$ and $\beta \in S$ imply $\alpha \in S$.
\end{defn}

\begin{defn}\cite{KL72} Let $\alpha \in I(\ell,m)$. The \emph{Schubert variety corresponding to $\alpha$} is $$\Omega_\alpha := \{ W \in \grassmann{\ell}{m} \ | \  \mathrm{det}_{I}(\vectorname{M}_{W}) = 0, \forall I \not\leq \alpha \}.$$\end{defn}

Note that $\Omega_\alpha$ is the algebraic subspace defined by $A_\alpha = span( \{ e_I  \ | I \not\leq \alpha \}) \leq \ff{q}^{I(\ell, m)}.$ 

\begin{defn}\cite[Proposition 2.6]{HJR07} Let $S$ be a downward closed subset of $I(\ell,m)$. A \emph{Schubert union} is a subset of the form $$\Omega_S := \bigcup\limits_{\alpha \in S} \Omega_\alpha.$$

\end{defn}

The linear subspace $$A_S := span( \{ e_{I}   \ | I \not\leq \alpha, \forall \alpha \in S \}) \leq \ff{q}^{I(\ell, m)}$$ also determines $\Omega_S.$ We denote by $$\{ e_\alpha | \alpha \in I(\ell, m)\}$$ the standard basis of $\ff{q}^{I(\ell, m)}$. 

\begin{defn}

Suppose $\alpha  \in I(\ell,m)$ let $W_\alpha$ be the row space of the $\ell\times m$ matrix whose $(i,\alpha_i)$--th entry is equal to $1$ and all other entries are $0$.  Let $S \subseteq I(\ell, m)$ be a downward closed subset. We define $$J_S := \{ W_\beta \  | \  \beta \in S\}.$$ 

\end{defn}

Note that $ev(W_\alpha) \in span({e_{\alpha}}) \leq \ff{q}^{I(\ell, m)}.$ The set $J_{I(\ell, m)}$ is known as an apartment of the Grassmannian, see \cite{P10}.  The sets $J_S$ will be important for encoding Schubert union codes.

\begin{defn}\cite{R87a} For each $W \in \grassmann{\ell}{m}$ pick an $\ell \times m$ matrix whose rowspace is $W$. Denote this matrix by $\vectorname{M}_W$.  Define $G$ as the following $\# I(\ell,m) \times \#\grassmann{\ell}{m}$ matrix:

$$ G :=  \left(\mathrm{det}_I(\vectorname{M}_W) \right)_{I \in I(\ell, m), W \in \grassmann{\ell}{m}}.$$

The Grassmann code $\grassmanncode{\ell}{m}$ is defined as the rowspace of $G$. \label{def:GrassmannEvaluationCode}
\end{defn}

The Grassmann code is a code from the projective system of the Pl\"ucker embedding, which maps $\grassmann{\ell}{m}$ into $\projectivespace{\binom{m}{\ell}-1}{\ff{q}}$. Grassmann codes were introduced in \cite{R87a, R87b} and \cite{N93}. The parameters of the Grassmann code are $[\# \grassmann{\ell}{m}, \binom{m}{\ell}, q^{\ell(m-\ell)}]$. As the elements $W \in J_{I(\ell, m)}$ are mapped to the standard basis vectors,  $J_{I(\ell, m)}$ is an information set for  $\grassmanncode{\ell}{m}$.
\begin{defn}\cite{GL00} A \emph{Schubert code} is the projection of $\grassmanncode{\ell}{m}$ onto $\Omega_\alpha$, that is:

$$\schubertcodealpha{\ell}{m}{\alpha} := \grassmanncode{\ell}{m}^{\Omega_\alpha}.$$

\end{defn}
Schubert codes were introduced in \cite{GL00}. The dimension of Schubert codes was found to be $\# J_\alpha$. In \cite[Theorem 2]{X08}, Xiang determined that the minimum distance is $q^{\delta_\alpha}$, where $\delta_\alpha := \sum (\alpha_i - i)$.

\begin{defn}\cite{HJR07} Let $S \subseteq I(\ell,m)$.  A \emph{Schubert union code} is defined as the projection of $\grassmanncode{\ell}{m}$ on $\Omega_S$. That is $$\schubertcodealpha{\ell}{m}{S} := \grassmanncode{\ell}{m}^{\Omega_S}.$$ \end{defn}

Schubert union codes were introduced in \cite{HJR07}. There the dimension of $\schubertcodealpha{\ell}{m}{S}$ was found to be $\# J_S$. The same authors found the minimum distance of Schubert union codes for $\ell=2$ in \cite{HJR09}. In the next section, we find the true minimum distance of Schubert union codes. 

\begin{prop}
Let $S$ be a downward closed subset of $I(\ell,m)$. The set $J_S$ is an information set for $\schubertcodealpha{\ell}{m}{S}.$

\end{prop}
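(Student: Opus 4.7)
My plan is to prove $J_S$ is an information set by showing directly that $(\schubertcodealpha{\ell}{m}{S})^{J_S} = \ff{q}^{J_S}$. Combined with the already-recalled dimension count $\dim \schubertcodealpha{\ell}{m}{S} = \# J_S$ from \cite{HJR07}, this is exactly the definition of an information set.

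The key observation is that the chosen representatives for $J_S$ have the simplest possible Plücker coordinates. For $\beta \in I(\ell, m)$, the matrix $\vectorname{M}_{W_\beta}$ has exactly one $1$ in each row at column $\beta_i$, so $\mathrm{det}_I(\vectorname{M}_{W_\beta}) = 1$ if $I = \beta$ and $0$ otherwise, i.e.\ $\mathrm{det}_I(\vectorname{M}_{W_\beta}) = \delta_{I,\beta}$. Two consequences follow. First, $W_\beta \in \Omega_\alpha$ iff every nonzero Plücker coordinate of $W_\beta$ is $\leq \alpha$, which holds iff $\beta \leq \alpha$. Hence $W_\beta \in \Omega_S$ iff some $\alpha \in S$ dominates $\beta$, and the downward closedness of $S$ makes this equivalent to $\beta \in S$. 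This shows $J_S \subseteq \Omega_S$, so projecting $\grassmanncode{\ell}{m}$ first onto $\Omega_S$ and then onto $J_S$ gives the same code as projecting directly onto $J_S$; in particular $(\schubertcodealpha{\ell}{m}{S})^{J_S} = (\grassmanncode{\ell}{m})^{J_S}$.

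Now I would inspect the submatrix of the generator matrix $G$ of Definition \ref{def:GrassmannEvaluationCode} obtained by keeping only the columns indexed by $J_S$. Its $(I, W_\beta)$-entry equals $\mathrm{det}_I(\vectorname{M}_{W_\beta}) = \delta_{I,\beta}$, so the row indexed by $I \in S$ is the standard basis vector $e_{W_I} \in \ff{q}^{J_S}$, while rows indexed by $I \notin S$ vanish. The row span of this submatrix is therefore all of $\ff{q}^{J_S}$, giving $(\grassmanncode{\ell}{m})^{J_S} = \ff{q}^{J_S}$, and hence $(\schubertcodealpha{\ell}{m}{S})^{J_S} = \ff{q}^{J_S}$ as required.

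There is no substantive obstacle: the argument is essentially bookkeeping once one identifies that the Plücker embedding sends the apartment representatives $W_\beta$ to standard basis vectors. The only place where the hypothesis is used non-trivially is in translating $J_S \subseteq \Omega_S$ through the downward closedness of $S$, and even this is a one-line observation. All other ingredients — the formula $\mathrm{det}_I(\vectorname{M}_{W_\beta}) = \delta_{I,\beta}$ and the known dimension of $\schubertcodealpha{\ell}{m}{S}$ — are already in place in the preceding text.
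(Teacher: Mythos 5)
Your proof is correct and follows essentially the same route as the paper: both rest on the fact that the representatives $W_\beta$ are sent by the Pl\"ucker map to (multiples of) the standard basis vectors $e_\beta$, so that $J_S \subseteq \Omega_S$ and the projection of $\schubertcodealpha{\ell}{m}{S}$ onto $J_S$ is all of $\ff{q}^{J_S}$, after which the dimension count $\dim \schubertcodealpha{\ell}{m}{S} = \# J_S$ from \cite{HJR07} finishes the argument. The paper merely phrases the first step as $J_S = J_{I(\ell,m)} \cap \Omega_S$ together with $J_{I(\ell,m)}$ being an information set for $\grassmanncode{\ell}{m}$, which is exactly the bookkeeping you carried out explicitly with the submatrix of $G$.
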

\begin{proof} Note that $J_S = J_{I(\ell, m)}\cap \Omega_S.$ Therefore, $\schubertcodealpha{\ell}{m}{S}^{J_S} = \schubertcodealpha{\ell}{m}{S}^{J_{I(\ell, m)} \cap \Omega_S} = \ff{q}^{J_S}.$ As $\dim\schubertcodealpha{\ell}{m}{S}^{J_S} = \# J_S$ the claim follows. \end{proof}

\section{Schubert union codes as Tanner codes}

Now we shall determine the structure of Schubert union codes and their duals.  Our main result is that the Schubert union code is a Tanner code from their point--line incidence graph as given in Definition \ref{defn:geometrygraphsubspace}. The sets $J_S$ are very important. 

\begin{prop}
The Pl\"ucker embedding maps a line $L \in \lines(\grassmann{\ell}{m})$ to a line in $\projectivespace{\binom{m}{\ell}}{\ff{q}}$.
\end{prop}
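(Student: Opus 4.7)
The plan is to parametrize the points of the line $\pi_Z^{Z'}$ by a projective line coming from the 2-dimensional quotient $Z'/Z$, and then use multilinearity of the determinant to show that the Pl\"ucker coordinates depend linearly on this parameter.

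More concretely, I would begin by fixing a basis $z_1, z_2, \ldots, z_{\ell-1}$ of $Z$ and extending it to a basis $z_1, z_2, \ldots, z_{\ell-1}, u, v$ of $Z'$. Since $\dim(Z'/Z)=2$, every $V \in \pi_Z^{Z'}$ arises uniquely as the $\ell$-dimensional subspace spanned by $z_1, \ldots, z_{\ell-1}$ together with a single vector of the form $\alpha u + \beta v$ for $[\alpha : \beta] \in \projectivespace{1}{\ff{q}}$. In particular, taking $\vectorname{M}_V$ to be the $\ell \times m$ matrix whose first $\ell-1$ rows are $z_1, \ldots, z_{\ell-1}$ and whose last row is $\alpha u + \beta v$ produces a representative of $V$ in the sense required by the Pl\"ucker embedding.

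Next, for each index $I \in I(\ell, m)$, the determinant $\mathrm{det}_I(\vectorname{M}_V)$ is multilinear and alternating in the rows of $\vectorname{M}_V$. Writing $\vectorname{M}_U$ and $\vectorname{M}_{V'}$ for the matrices whose last rows are $u$ and $v$ respectively (and whose other rows are $z_1, \ldots, z_{\ell-1}$), this multilinearity gives
\[
\mathrm{det}_I(\vectorname{M}_V) \;=\; \alpha\,\mathrm{det}_I(\vectorname{M}_U) \;+\; \beta\,\mathrm{det}_I(\vectorname{M}_{V'})
\]
for every $I$. Therefore $ev(V) = \alpha\, ev(U) + \beta\, ev(V')$ in $\ff{q}^{I(\ell,m)}$, which means $ev(\pi_Z^{Z'})$ lies in the projective line spanned by $ev(U)$ and $ev(V')$ in $\projectivespace{\binom{m}{\ell}-1}{\ff{q}}$.

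To finish, I would note that $\#\pi_Z^{Z'} = q+1$ (the number of $1$-dimensional subspaces of the $2$-dimensional space $Z'/Z$) and that a projective line over $\ff{q}$ also has exactly $q+1$ points. Since the Pl\"ucker embedding is injective on $\grassmann{\ell}{m}$, the image $ev(\pi_Z^{Z'})$ consists of $q+1$ distinct points inside a projective line of $q+1$ points, hence equals this line. The only mild obstacle is book-keeping with the representative matrix $\vectorname{M}_V$, since the Pl\"ucker coordinates depend on this choice up to a common scalar; this is harmless because the multilinearity identity holds before passing to projective coordinates, and rescaling each $\vectorname{M}_V$ only rescales the whole vector $ev(V)$, which leaves the projective line unchanged.
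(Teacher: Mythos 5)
Your proof is correct and follows essentially the same route as the paper: parametrize the pencil $\pi_Z^{Z'}$ by a vector $\alpha u + \beta v$ in the last row of the representative matrix and use multilinearity of $\mathrm{det}_I$ to see that the Pl\"ucker coordinates depend linearly on $[\alpha:\beta]$. Your version is in fact slightly more complete, since the homogeneous parameter covers all $q+1$ points (the paper's affine parameter $u+\beta w$ omits one) and you add the counting-plus-injectivity step showing the image is the whole line rather than merely contained in it.
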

\begin{proof} Consider $U, W \in L$, where $L \in \lines(\grassmann{\ell}{m}).$ As $\dim (U \cap W) = \ell-1$, $U$ and $W$ have representative matrices of the form $\vectorname{M}_{U}$ and $\vectorname{M}_{W}$, where these matrices coincide on the first $\ell-1$. Let $u$ be the last row of $\vectorname{M}_{U}$ and $w$ be the last row of $\vectorname{M}_{W}$. Any $T \in L$, $L \neq W$ is of the form $span(U \cap W, u + \beta w)$, where $\beta \in \ff{q}$. There exists a representative matrix for $T$ of the form $\vectorname{M}_{T}$ where the first $\ell-1$ rows are the same rows as $\vectorname{M}_{U}$ and $\vectorname{M}_{W}$ and the last row is $u + \beta w$. The multilinearity of the determinant implies $\mathrm{det}_{I}(\vectorname{M}_{T}) = \mathrm{det}_{I}(\vectorname{M}_{U}) + \beta \mathrm{det}_{I}(\vectorname{M}_{W})$ for $I \in I(\ell, m)$. Hence, the vectors $ev(T)$ are in the line containing $ev(U)$ and $ev(W)$. In this way, $L$ is identified with $\projectivespace{1}{\ff{q}}$ as a line of  $\projectivespace{\binom{m}{\ell}-1}{\ff{q}}$.\end{proof}

 The linear code corresponding to $\projectivespace{1}{\ff{q}}$ is the doubly extended Reed--Solomon code of dimension $2$. Its parameters are $[q+1,2,q]$. It is an MDS code of dimension $2$. As the Pl\"ucker embedding  maps $\grassmann{1}{2}$ to $\projectivespace{1}{\ff{q}}$, the code $\grassmanncode{1}{2}$ is a doubly extended Reed--Solomon code  corresponding to $\projectivespace{1}{\ff{q}}$.

\begin{lem}
Let $S$ be a downward closed subset of $I(\ell,m)$. There exists a Tanner code $$C = (\Gamma_{\Omega_S}, \grassmanncode{1}{2})$$ such that, $$\schubertcodealpha{\ell}{m}{S}  = C. $$\label{lem:SchubertTannerSubcode}
\end{lem}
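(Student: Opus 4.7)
The plan is to prove the equality via a double-containment plus dimension-count argument: first show $\schubertcodealpha{\ell}{m}{S} \subseteq C$ where $C = (\Gamma_{\Omega_S}, \grassmanncode{1}{2})$ by directly checking the Tanner constraints, then use Corollary~\ref{cor:forcingsetdimension2} to bound $\dim C \leq \# J_S$ via a $2$--forcing set argument. Since $\dim \schubertcodealpha{\ell}{m}{S} = \# J_S$ is known from the previous section, the chain of inequalities collapses and forces equality of the two codes.

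For the containment, I would take $c \in \schubertcodealpha{\ell}{m}{S}$ and lift it to $c' \in \grassmanncode{\ell}{m}$ with $c = (c')^{\Omega_S}$. By Definition~\ref{def:GrassmannEvaluationCode}, $c'_W$ is a fixed $\ff{q}$--linear combination $\sum_I a_I \mathrm{det}_I(\vectorname{M}_W)$ of the Pl\"ucker coordinates of $W$. For any line $L = \pi_Z^{Z'} \in \lines(\Omega_S)$, the proposition immediately preceding this lemma shows that the Pl\"ucker image of $L$ is a line of $\projectivespace{\binom{m}{\ell}-1}{\ff{q}}$, so $c^L = (c')^L$ — being the evaluation of a linear form along $q+1$ collinear projective points — is a codeword of a code equivalent to $\grassmanncode{1}{2}$, the doubly extended Reed--Solomon $[q+1, 2, q]$ MDS code. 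Collecting these Pl\"ucker-induced bijections $\phi_L$ and component codes $C'_L$ realizes $c$ as a codeword of the Tanner code $C$.

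The main obstacle is proving that $J_S$ is a $2$--forcing set for $\Gamma_{\Omega_S}$, i.e., $\cl{\Gamma_{\Omega_S}}{2}{J_S} = \Omega_S$. My approach would be to process the Schubert cell decomposition $\Omega_S = \bigsqcup_{\alpha \in S} C_\alpha$ in a linear extension of the Bruhat order on $S$, having first verified that $\Omega_S$ is a geometric subspace via Proposition~\ref{prof:geomsubspace} applied to $A_S$, so that lines of $\Omega_S$ are precisely the lines of $\grassmann{\ell}{m}$ contained in $\Omega_S$. The base cell $C_{(1,2,\ldots,\ell)} = \{W_{(1,2,\ldots,\ell)}\}$ is immediate. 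Each higher cell $C_\alpha$ contains the unique coordinate flag $W_\alpha \in J_S$ together with $q^{\delta_\alpha} - 1$ other points; for each such non-flag point $W \in C_\alpha$ I would produce a line $L \subseteq \Omega_S$ through $W$ that also contains $W_\alpha$ and a second point already forced in some strictly lower cell $C_\beta$, $\beta < \alpha$. Two known points on $L$ then force all $q+1$ points of $L$ by the MDS property of $\grassmanncode{1}{2}$, and in particular force $W$. The delicate technical step is constructing this linking line explicitly — essentially by sliding one pivot in a reduced row-echelon representative of $W$ from column $\alpha_i$ down to a column $\beta_i < \alpha_i$ as a parameter varies over $\ff{q} \cup \{\infty\}$ — and certifying that the resulting line is contained in $\Omega_S$ and has a second endpoint in an already-forced lower cell.
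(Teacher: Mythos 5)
Your first step already coincides with the paper's entire proof of this lemma: since Definition 2.3 calls $C$ a Tanner code $(\Gamma_{\Omega_S},\grassmanncode{1}{2})$ as soon as each projection $C^{\ngraph{L}}$ lands (after a bijection) inside a code equivalent to $\grassmanncode{1}{2}$, one simply takes $C:=\schubertcodealpha{\ell}{m}{S}$ and checks, exactly as you do via the preceding proposition on the Pl\"ucker image of a line, that $c^L$ is the evaluation of a linear form on $q+1$ collinear points of $\projectivespace{\binom{m}{\ell}-1}{\ff{q}}$, hence lies in a $[q+1,2,q]$ code equivalent to $\grassmanncode{1}{2}$. The maximality statement you then set out to prove (via $\cl{\Gamma_{\Omega_S}}{2}{J_S}=\Omega_S$, Corollary \ref{cor:forcingsetdimension2} and $\dim\schubertcodealpha{\ell}{m}{S}=\#J_S$) is not needed here; it is the content of the later Theorem \ref{thm:SchubertTannercode}, and the paper proves the forcing statement (Theorem \ref{thm:closureunion}) quite differently: the $2$--closure of $J_S$ is a geometric, hence algebraic, subspace of $\Omega_S$ containing all $W_\beta$ with $ev(W_\beta)=e_\beta$, $\beta\in S$, so it satisfies no linear equations beyond those defining $\Omega_S$ and must equal $\Omega_S$. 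No explicit lines are ever constructed.

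In your version of that step there is a genuine gap. You propose, for a non-flag point $W$ in the cell $C_\alpha$, a line of the Grassmannian through $W$ that also contains $W_\alpha$; but two points of $\grassmann{\ell}{m}$ lie on a common line only when their intersection has dimension $\ell-1$, and in general $\dim(W\cap W_\alpha)<\ell-1$. For instance, with $\ell=2$, $m=4$, $\alpha=(3,4)$, take $W$ to be the row space of the matrix with rows $(a,b,1,0)$ and $(c,d,0,1)$ where $ad-bc\neq 0$: then $W\cap W_\alpha=0$, so no line contains both $W$ and $W_\alpha$, and sliding a single pivot produces a line through $W_\alpha$ only when every other row of the representative is already a coordinate vector. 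The induction can be repaired --- e.g.\ induct on the number of nonzero free entries of the reduced representative of $W$, varying one such entry to get a line through $W$ whose zero-parameter point has strictly fewer nonzero entries and whose point at infinity lies in a strictly lower cell, yielding two previously forced points --- but as written the linking-line step fails, and the paper's geometric-subspace argument avoids the issue entirely.
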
 \begin{proof} In view of Proposition \ref{prof:geomsubspace}, the point--line incidence graph $\Gamma_{\Omega_S}$ is a $(q+1)$--right regular graph. For any $L \in \lines(\Omega_S)$ the neighborhood $\ngraph{L}$ is simply the line $L$, which is identified with $\projectivespace{1}{\ff{q}}$. Therefore, the code $\schubertcodealpha{\ell}{m}{S}^L$ is isomorphic to $\grassmanncode{1}{2}$ and $\schubertcodealpha{\ell}{m}{S} $ is a Tanner code $(\Gamma_{\Omega_S}, \grassmanncode{1}{2})$. \end{proof}

We have the following lemma on $2$--closed sets of $\grassmann{\ell}{m}$ and geometric subspaces.

\begin{lem}

Let $X \subseteq \grassmann{\ell}{m}$. The set $X$ is $2$--closed with respect to $\Gamma$ if and only if $X$ is a geometric subspace.
\label{lem:2forcinggeom}
\end{lem}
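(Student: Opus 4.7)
The plan is to derive the equivalence directly from the characterization of $k$-closed sets in Lemma \ref{lem:closedsetintersectionnum}, combined with the fact that every line of the Grassmannian has exactly $q+1$ points. Since the statement is an ``if and only if'' between two local conditions on each line $L \in \lines(\grassmann{\ell}{m})$, the argument reduces to comparing what each condition says about the number $\#(L \cap X)$ for an arbitrary line $L$.

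First I would unpack the two sides. By Lemma \ref{lem:closedsetintersectionnum} applied with $n' = q+1$ and $k=2$, the set $X$ is $2$-closed with respect to $\Gamma$ precisely when, for every $L \in \lines(\grassmann{\ell}{m})$, either $\#(L \cap X) < 2$ or $L \subseteq X$. Since the neighborhood of $L$ in $\Gamma$ is exactly the set of $(\ell)$-subspaces lying on $L$, and the preceding Proposition shows this neighborhood has cardinality $q+1$, the condition $L \subseteq X$ is equivalent to $\#(L \cap X) = q+1$. Hence $X$ is $2$-closed iff $\#(L \cap X) \in \{0,1,q+1\}$ for every line $L$, which is exactly the definition of a geometric subspace (Definition \ref{defn:geometricsubspaces}).

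For the forward direction ($2$-closed $\Rightarrow$ geometric subspace), I take an arbitrary line $L$; the dichotomy from Lemma \ref{lem:closedsetintersectionnum} gives $\#(L\cap X)\le 1$ or $\#(L\cap X)=q+1$, so $\#(L\cap X)\in\{0,1,q+1\}$ and $X$ is geometric. For the reverse direction (geometric $\Rightarrow$ $2$-closed), I again pick an arbitrary line $L$; by hypothesis $\#(L\cap X)\in\{0,1,q+1\}$, so either $\#(L\cap X)<2$ or $L\subseteq X$, which is the hypothesis of Lemma \ref{lem:closedsetintersectionnum} ensuring $2$-closedness.

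I do not anticipate a real obstacle here: the argument is essentially a translation between two equivalent combinatorial formulations of the same local condition on lines. The only ingredient that goes beyond pure definition-chasing is knowing that $n' = q+1$ for the graph $\Gamma$, which is precisely the content of the previous Proposition identifying each line of $\grassmann{\ell}{m}$ with $\projectivespace{1}{\ff{q}}$. Once that is invoked, the ``$<2$'' branch of $2$-closedness matches the ``$0$ or $1$'' branch of the geometric subspace definition, and the ``$L \subseteq T$'' branch matches the ``$q+1$'' branch, so the two characterizations coincide line-by-line.
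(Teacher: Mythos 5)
Your proposal is correct and matches the paper's own argument: both invoke Lemma \ref{lem:closedsetintersectionnum} with $k=2$ and $n'=q+1$ to translate $2$--closedness into the condition that every line meets $X$ in $0$, $1$ or $q+1$ points, which is the definition of a geometric subspace. You simply spell out the line-by-line dictionary in more detail than the paper does.
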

\begin{proof} Lemma \ref{lem:closedsetintersectionnum} implies $X$ is $2$--closed if and only if
  $X$ intersects a line in $0$, $1$ or $q+1$ places. This is precisely the statement that $X$ is a geometric subspace. \end{proof}

\begin{lem}

Let $Y \subseteq X \subseteq \grassmann{\ell}{m}$. The set $Y$ is $2$--closed with respect to $\Gamma_X$ if and only if $Y$ is a geometric subspace of the Grassmannian.
\label{lem:2forcinggeom2}
\end{lem}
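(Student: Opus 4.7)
The plan is to translate the $2$-closed condition into an intersection condition via Lemma \ref{lem:closedsetintersectionnum}, and then exploit the fact that $X$ itself is already a geometric subspace (this being implicit in the definition of $\Gamma_X$ via Definition \ref{defn:geometrygraphsubspace}) to promote lines of the full Grassmannian to lines of $X$.

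First, by Lemma \ref{lem:closedsetintersectionnum} applied to the graph $\Gamma_X$ with $n' = q+1$ and $k = 2$, the set $Y \subseteq X$ is $2$-closed with respect to $\Gamma_X$ if and only if, for every $L \in \lines(X)$, one has either $\#(Y \cap L) < 2$ or $L \subseteq Y$. Since neighborhoods in $\Gamma_X$ are precisely the lines of $X$ (viewed as subsets of their points), this is the statement I will use throughout.

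For the $(\Leftarrow)$ direction, assume $Y$ is a geometric subspace of $\grassmann{\ell}{m}$. Then for every $L \in \lines(\grassmann{\ell}{m})$, and in particular for every $L \in \lines(X)$, we have $\#(Y \cap L) \in \{0, 1, q+1\}$. This immediately yields the dichotomy $\#(Y \cap L) < 2$ or $L \subseteq Y$, so $Y$ is $2$-closed with respect to $\Gamma_X$.

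For the $(\Rightarrow)$ direction, assume $Y$ is $2$-closed with respect to $\Gamma_X$, and let $L \in \lines(\grassmann{\ell}{m})$. I need to show $\#(Y \cap L) \in \{0,1,q+1\}$. Suppose instead that $\#(Y \cap L) \geq 2$. Since $Y \subseteq X$, this forces $\#(X \cap L) \geq 2$. The key step is that $X$ is a geometric subspace (by the hypothesis in Definition \ref{defn:geometrygraphsubspace} needed for $\Gamma_X$ to be defined), so Definition \ref{defn:geometricsubspaces} gives $\#(X \cap L) \in \{0,1,q+1\}$, whence $\#(X \cap L) = q+1$, i.e., $L \subseteq X$ and therefore $L \in \lines(X)$. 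The $2$-closedness of $Y$ with respect to $\Gamma_X$ then forces $L \subseteq Y$, i.e., $\#(Y \cap L) = q+1$, as required.

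There is no real obstacle here; the only subtle point is recognizing that $X$ being geometric (inherent to the definition of $\Gamma_X$) is exactly what is needed to pass from a line of $\grassmann{\ell}{m}$ meeting $Y$ twice to a line of $\lines(X)$, so that the hypothesis on $\Gamma_X$ can be applied.
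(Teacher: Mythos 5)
Your proof is correct and is essentially an unpacked version of the paper's argument. The paper dispatches this lemma in one line by citing Lemma \ref{lem:subgraphforcing} (which says that $k$-closures agree in $G$ and in the induced subgraph $G_T$ when $T$ is $k$-closed) together with Lemma \ref{lem:2forcinggeom} (the $X = \grassmann{\ell}{m}$ case), and the only substantive point hidden in that citation is exactly the one you make explicit: since $X$ is itself a geometric subspace, any line of the Grassmannian meeting $Y \subseteq X$ in at least two points must already lie in $\lines(X)$, so the intersection criterion of Lemma \ref{lem:closedsetintersectionnum} transfers unchanged between $\Gamma$ and $\Gamma_X$. So you have, in effect, reproved the relevant instances of Lemmas \ref{lem:subgraphforcing} and \ref{lem:2forcinggeom} inline rather than invoking them; both routes rest on the same observation.
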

\begin{proof} Derived from Lemmas \ref{lem:subgraphforcing} and \ref{lem:2forcinggeom}. \end{proof}

\begin{lem}

Let $X \subseteq  \grassmann{\ell}{m}$. The smallest geometric subspace containing $X$ is $ \cl{\Gamma}{2}{X}$.
\end{lem}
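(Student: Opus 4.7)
The plan is to show the two required properties: (a) $\cl{\Gamma}{2}{X}$ is itself a geometric subspace containing $X$, and (b) any geometric subspace containing $X$ must contain $\cl{\Gamma}{2}{X}$. Together these identify $\cl{\Gamma}{2}{X}$ as the (unique) smallest geometric subspace containing $X$.

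For part (a), note that by the very definition of Algorithm \ref{alg:1}, the output $\cl{\Gamma}{2}{X}$ is $2$--closed with respect to $\Gamma$, and clearly it contains the initial set $X$. Applying Lemma \ref{lem:2forcinggeom} with $k=2$ and $n' = q+1$ immediately yields that $\cl{\Gamma}{2}{X}$ is a geometric subspace of $\grassmann{\ell}{m}$.

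For part (b), suppose $Y$ is any geometric subspace of $\grassmann{\ell}{m}$ with $X \subseteq Y$. By Lemma \ref{lem:2forcinggeom}, $Y$ is $2$--closed with respect to $\Gamma$. I would prove $\cl{\Gamma}{2}{X} \subseteq Y$ by tracking the iterations of Algorithm \ref{alg:1} on input $X$: write $Z_0 := X$ and $Z_{i+1} := Z_i \cup \ngraph{u_{i+1}}$, where $u_{i+1} \in V_2(\Gamma)$ is a line with $2 \leq \#(\ngraph{u_{i+1}} \cap Z_i) < q+1$. By induction on $i$, assume $Z_i \subseteq Y$; then $\#(\ngraph{u_{i+1}} \cap Y) \geq 2$, and since $Y$ is $2$--closed, Lemma \ref{lem:closedsetintersectionnum} forces $\ngraph{u_{i+1}} \subseteq Y$, so $Z_{i+1} \subseteq Y$. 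The algorithm terminates in finitely many steps, yielding $\cl{\Gamma}{2}{X} \subseteq Y$.

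The argument is essentially bookkeeping; the only mild subtlety is to appeal to Lemma \ref{lem:closedsetintersectionnum} at each induction step so that the inclusion $\ngraph{u_{i+1}} \subseteq Y$ is justified from the hypothesis that $Y$ is $2$--closed, rather than merely from $Y$ being a geometric subspace directly. I do not expect any serious obstacle here, since all the heavy lifting has been done in Lemmas \ref{lem:closedsetintersectionnum} and \ref{lem:2forcinggeom}.
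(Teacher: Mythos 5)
Your proposal is correct, and its first half coincides with the paper's proof: both establish that $\cl{\Gamma}{2}{X}$ is a geometric subspace containing $X$ via Lemma \ref{lem:2forcinggeom}, and then argue that every geometric subspace containing $X$ contains the closure. The difference lies in how the second half is justified. The paper argues abstractly: for a geometric subspace $T \supseteq X$ it invokes monotonicity of the closure operator, $\cl{\Gamma}{2}{X} \subseteq \cl{\Gamma}{2}{T}$, together with idempotence on closed sets, $\cl{\Gamma}{2}{T} = T$, neither of which is stated as a separate lemma in the paper. You instead unpack this step by induction over the iterations of Algorithm \ref{alg:1}: since each added line meets the current set $Z_i \subseteq Y$ in at least two points and $Y$ is $2$--closed, Lemma \ref{lem:closedsetintersectionnum} forces the whole line into $Y$. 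Your version is slightly more elementary and self-contained, as it only uses Lemmas \ref{lem:closedsetintersectionnum} and \ref{lem:2forcinggeom} plus the algorithm itself, and in effect it supplies the proof of the monotonicity fact that the paper takes for granted; the paper's version is shorter and isolates a reusable property of the closure operator. Both are valid, and one small improvement to your write-up would be to note explicitly that the containment you prove holds for any execution order of the algorithm, which is harmless here because the output is order-independent by the paper's earlier lemma.
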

\begin{proof} Clearly, Lemma \ref{lem:2forcinggeom} implies $ \cl{\Gamma}{2}{X}$ is a geometric subspace containing $X$. Now suppose $T$ is a geometric subspace containing $X$. Then, $\cl{\Gamma}{2}{X} \subseteq \cl{\Gamma}{2}{T}$. However, as $T$ is a geometric subspace, we have $\cl{\Gamma}{2}{T} = T$, which implies $ \cl{\Gamma}{2}{X}$ is contained in all geometric spaces containing $S$. \end{proof}

In theory, $\cl{\Gamma}{2}{S}$ may be found with Algorithm \ref{alg:1}. As, $\Gamma$ has ${ m \brack \ell+1 }_q{\ell+1 \brack 2}_q$ lines, at worst one would need to compare around  $({ m \brack \ell+1 }_q{\ell+1 \brack 2}_q)^2$ lines.  However $\cl{\Gamma}{2}{S}$ is the smallest geometric subspace containing $S$. One could find  $\cl{\Gamma}{2}{S}$ as a geometric or algebraic subspace instead.  In this case, one would need to check the basis of $span(ev(S))^\perp$ with all ${ m \brack \ell}_q$ elements of  $\grassmann{\ell}{m}$.

\begin{thm}Let $S$ be a downward closed subset of $I(\ell,m)$. Then, $$\cl{\Gamma_{\Omega_S}}{2}{J_S} = \Omega_S $$\label{thm:closureunion}\end{thm}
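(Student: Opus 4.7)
The plan is to leverage the equivalence between algebraic and geometric subspaces (Proposition~\ref{prof:geomsubspace}) together with Lemma~\ref{lem:2forcinggeom2}, so as to avoid any direct inductive construction of Grassmann lines through each element of $\Omega_S$.

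The containment $\cl{\Gamma_{\Omega_S}}{2}{J_S} \subseteq \Omega_S$ is essentially for free: first, $J_S \subseteq \Omega_S$ because for each $\beta \in S$ the Pl\"ucker image $ev(W_\beta)$ is a scalar multiple of $e_\beta$, which vanishes on every Pl\"ucker equation defining $\Omega_S$; and since the $2$-closure lies inside $V_1(\Gamma_{\Omega_S}) = \Omega_S$ by construction, the containment follows. For the reverse inclusion, set $Y := \cl{\Gamma_{\Omega_S}}{2}{J_S}$. Being a $2$-closed subset of $\Omega_S$ with respect to $\Gamma_{\Omega_S}$, Lemma~\ref{lem:2forcinggeom2} identifies $Y$ as a geometric subspace of $\grassmann{\ell}{m}$; Proposition~\ref{prof:geomsubspace} then produces a subspace $B \leq \ff{q}^{I(\ell,m)}$ with $Y = \{W \in \grassmann{\ell}{m} : ev(W) \in B^\perp\}$. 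From $J_S \subseteq Y$ and $ev(W_\beta) \in span(\{e_\beta\})$, we obtain $e_\beta \perp B$ for every $\beta \in S$, so $B \subseteq span(\{e_I : I \notin S\})$. Because $S$ is downward closed, this last span equals $A_S$; hence $B \subseteq A_S$, and dualizing gives $Y \supseteq \{W : ev(W) \in A_S^\perp\} = \Omega_S$.

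The main conceptual step is the duality $B \subseteq A_S \Rightarrow B^\perp \supseteq A_S^\perp \Rightarrow Y \supseteq \Omega_S$, together with the identification $span(\{e_I : I \not\leq \alpha \ \forall \alpha \in S\}) = span(\{e_I : I \notin S\})$ used in the definition of $A_S$. The latter is a short combinatorial check: the forward containment is immediate, and the reverse uses that $I \leq \alpha \in S$ forces $I \in S$ by downward closedness. Once both are in hand, the proof avoids any explicit enumeration of lines of the Grassmannian or appeal to the Schubert cell decomposition, which is what makes the algebraic--geometric dictionary so convenient here.
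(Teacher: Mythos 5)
Your proof is correct and follows essentially the same route as the paper: both arguments show that the $2$--closure is a geometric, hence algebraic, subspace of $\grassmann{\ell}{m}$ containing $J_S$, so that any linear equation vanishing on it must lie in $A_S$, which forces the closure to contain and therefore equal $\Omega_S$. The only difference is cosmetic: you spell out the duality step $B \subseteq A_S \Rightarrow A_S^\perp \subseteq B^\perp$ and the identification $A_S = span(\{ e_I \mid I \notin S \})$ coming from downward closedness, which the paper's proof leaves implicit in its ``no additional linear equations'' dimension count.
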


\begin{proof}Note that  $J_S \subseteq \Omega_S$. As Lemma \ref{lem:2forcinggeom} implies $\cl{\Gamma_{\Omega_S}}{2}{J_S}$ is a geometric subspace and Proposition \ref{prof:geomsubspace} implies $\Omega_S$ is a geometric subspace we conclude that $\cl{\Gamma_{\Omega_S}}{2}{J_S} \subseteq \Omega_S.$ 

As $\Omega_S$ is the algebraic subspace defined by $\mathrm{det}_I(\vectorname{M}_W) = 0$ for $ I \not\leq \alpha,$ for all $\alpha \in S$ it follows that $$ev(\Omega_\alpha) =  ev(\grassmann{\ell}{m}) \cap span(\{ e_{\beta} \ | \ \beta \in S \} ).$$ As $\cl{\Gamma_{\Omega_S}}{2}{J_S}$ is a geometric subspace,   Proposition \ref{prof:geomsubspace} implies $\cl{\Gamma_{\Omega_S}}{2}{J_S}$ is an algebraic subspace. As $\cl{\Gamma_{\Omega_S}}{2}{J_S}$ is a subspace of $\Omega_S$,  the set $ev(\cl{\Gamma_{\Omega_S}}{2}{J_S})$ is contained inside the set $\{x \in \ff{q}^{I(\ell, m)} \ | \ x_{\alpha} = 0, \alpha \not\in S\}$. Recall that the vector  $ev(W_\beta) = e_\beta \in \ff{q}^{I(\ell, m)}$. Hence, $ev(\cl{\Gamma_{\Omega_S}}{2}{J_S}) \subseteq \ff{q}^{I(\ell, m)}$ has a basis for  vector space of dimension $\# S$. Thus,$\cl{\Gamma_{\Omega_S}}{2}{J_S}$ satisfies no additional linear equations which implies $\cl{\Gamma_{\Omega_S}}{2}{J_S} = \Omega_S$.     \end{proof}

As $J_S$ is a $2$--forcing set for $\Gamma_S$ we obtain the following.

\begin{thm}\label{thm:SchubertTannercode}
Let $S$ be a downward closed subset of $I(\ell,m)$. If $C$ is a Tanner code of the form $C = (\Gamma_{\Omega_S}, \grassmanncode{1}{2})$ such that $\schubertcodealpha{\ell}{m}{S} \subseteq C,$ then 
$$\schubertcodealpha{\ell}{m}{S} =C.$$
\end{thm}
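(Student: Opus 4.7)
The plan is to argue by a dimension squeeze: the inclusion $\schubertcodealpha{\ell}{m}{S} \subseteq C$ already gives one inequality, and a matching upper bound on $\dim C$ will force equality.

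First I would invoke Lemma \ref{lem:SchubertTannerSubcode} to note that $\schubertcodealpha{\ell}{m}{S}$ itself is a Tanner code of the form $(\Gamma_{\Omega_S}, \grassmanncode{1}{2})$, and that its dimension equals $\# J_S$ (this count was recorded earlier when $J_S$ was shown to be an information set for the Schubert union code). Hence the hypothesis $\schubertcodealpha{\ell}{m}{S} \subseteq C$ immediately yields $\#J_S = \dim \schubertcodealpha{\ell}{m}{S} \leq \dim C$.

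Next I would produce the reverse inequality using the forcing machinery developed in Sections 3--4. Theorem \ref{thm:closureunion} states exactly that $\cl{\Gamma_{\Omega_S}}{2}{J_S} = \Omega_S$, so $J_S$ is a $2$--forcing set for the graph $\Gamma_{\Omega_S}$. The component code $\grassmanncode{1}{2}$ is the doubly extended Reed--Solomon code with parameters $[q+1,2,q]$, which is MDS of dimension $k=2$ and matches the right-regularity of $\Gamma_{\Omega_S}$ (each line contains $q+1$ points). Consequently Corollary \ref{cor:forcingsetdimension2} applies to every Tanner code of the form $(\Gamma_{\Omega_S}, \grassmanncode{1}{2})$ and, in particular, to $C$, giving $\dim C \leq \# J_S$.

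Combining the two inequalities yields $\dim C = \#J_S = \dim \schubertcodealpha{\ell}{m}{S}$, so the inclusion $\schubertcodealpha{\ell}{m}{S} \subseteq C$ must be an equality. The main conceptual step, and the only one that is not bookkeeping, is the application of Corollary \ref{cor:forcingsetdimension2}; but since that corollary is stated for an arbitrary Tanner code with MDS component code and any $k$--forcing set, there is no real obstacle here — the work was already done in Theorem \ref{thm:closureunion}, which showed that the ``apartment-like'' set $J_S$ $2$--forces the full Schubert union under the point--line incidence graph.
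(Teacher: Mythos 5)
Your argument is correct and follows essentially the same route as the paper's own proof: the inclusion gives $\#J_S = \dim \schubertcodealpha{\ell}{m}{S} \leq \dim C$, while Theorem \ref{thm:closureunion} makes $J_S$ a $2$--forcing set so that Corollary \ref{cor:forcingsetdimension2} yields $\dim C \leq \#J_S$, forcing equality. No differences worth noting.
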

\begin{proof} Let $C =  (\Gamma_{\Omega_S}, \grassmanncode{1}{2})$ such that $\schubertcodealpha{\ell}{m}{S} \subseteq C$. Then, $\# J_S = \dim \schubertcodealpha{\ell}{m}{S}.$ As $J_S$ is a $2$--forcing set for $\Gamma_{\Omega_S}$, Corollary \ref{cor:forcingsetdimension2} implies $\dim C  \leq \# J_S$.  Therefore, $$\schubertcodealpha{\ell}{m}{S} = C.$$  \end{proof}

\begin{cor}
Algorithm \ref{alg:2}  extends a message $m \in \ff{q}^{J_S}$ a codeword $c \in \schubertcodealpha{\ell}{m}{S}$ with an encoder for $\grassmanncode{1}{2}$ and the lines used to find $\cl{\Gamma_{\Omega_S}}{2}{J_S}$.
\end{cor}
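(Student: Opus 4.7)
The plan is to assemble the corollary from three ingredients that have already been established in the paper. First, Theorem \ref{thm:SchubertTannercode} identifies $\schubertcodealpha{\ell}{m}{S}$ with the Tanner code $(\Gamma_{\Omega_S}, \grassmanncode{1}{2})$. Second, the component code $\grassmanncode{1}{2}$ is the doubly extended Reed--Solomon code with parameters $[q+1, 2, q]$, so it is MDS of dimension $k = 2$. Third, Theorem \ref{thm:closureunion} shows $\cl{\Gamma_{\Omega_S}}{2}{J_S} = \Omega_S$, i.e.\ $J_S$ is a $2$--forcing set for $\Gamma_{\Omega_S}$, and it was already shown to be an information set.

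With these in hand, I would feed the graph $\Gamma_{\Omega_S}$, the component code $\grassmanncode{1}{2}$, the set $S := J_S$ and the message $m$ into Algorithm \ref{alg:2}. At each iteration the algorithm selects a line $L \in \lines(\Omega_S)$ with $2 \leq \#(L \cap Z) < q + 1$; for each undetermined $v \in L$ it calls $PARITYBIT(m'^{L \cap Z}, \grassmanncode{1}{2}, L \cap Z, v)$. Because $\grassmanncode{1}{2}$ is MDS of dimension $2$, any two values on $L$ determine all remaining coordinates, so this call is simply an invocation of an encoder for the doubly extended Reed--Solomon code. The preceding theorem about Algorithm \ref{alg:2} then guarantees that when the algorithm halts, $m'$ is the unique codeword in $(\Gamma_{\Omega_S}, \grassmanncode{1}{2})^{\cl{\Gamma_{\Omega_S}}{2}{J_S}}$ whose projection on $J_S$ is $m$. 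Since $\cl{\Gamma_{\Omega_S}}{2}{J_S} = \Omega_S$ and the Tanner code equals $\schubertcodealpha{\ell}{m}{S}$ (Theorem \ref{thm:SchubertTannercode}), this $m'$ is precisely the desired $c$.

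The main, and essentially only, obstacle is bookkeeping: one has to observe that the outer loops of Algorithms \ref{alg:1} and \ref{alg:2} traverse $\vgraphtwo{\Gamma_{\Omega_S}}$ in the same way, so the order--independence of the closure implies that whichever sequence of lines realizes $\cl{\Gamma_{\Omega_S}}{2}{J_S}$ is a valid execution trace of Algorithm \ref{alg:2}. The operations executed along this trace are nothing more than repeated applications of an encoder for $\grassmanncode{1}{2}$, which is the content of the corollary.
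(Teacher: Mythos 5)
Your proposal is correct and matches the paper's (implicit) reasoning: the corollary is stated without proof precisely because it follows by combining the correctness theorem for Algorithm \ref{alg:2} with Theorem \ref{thm:closureunion}, Theorem \ref{thm:SchubertTannercode}, the fact that $J_S$ is an information set, and the MDS property of $\grassmanncode{1}{2}$ --- exactly the ingredients you assemble.
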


We can use this encoder to find the minimum distance of $\schubertcodealpha{\ell}{m}{S}$. Recall that, for $\alpha \in I(\ell, m)$,  $\delta_\alpha$ denotes the sum $\sum (\alpha_i - i)$ and that $q^{\delta_\alpha}$ is the minimum distance of $\schubertcodealpha{\ell}{m}{\alpha}$.

\begin{thm}
Let $S$ be a downward closed subset of $I(\ell,m)$. Suppose $S'$ is the set of the maximal elements of $S$. Then, the minimum distance of $\schubertcodealpha{\ell}{m}{S}$ is $min_{\alpha \in S'}\{ q^{\delta_\alpha} \}$.
\end{thm}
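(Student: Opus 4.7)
The plan is to establish matching upper and lower bounds on $d(\schubertcodealpha{\ell}{m}{S})$, both equal to $\min_{\alpha \in S'} q^{\delta_\alpha}$, by combining Xiang's formula $d(\schubertcodealpha{\ell}{m}{\alpha}) = q^{\delta_\alpha}$ with the fact that any two distinct maximal elements of $S$ are incomparable in the Bruhat order.

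For the \emph{lower bound}, I would exploit that, for each $\alpha \in S$, restriction to the coordinates of $\Omega_\alpha$ sends $\schubertcodealpha{\ell}{m}{S}$ onto $\schubertcodealpha{\ell}{m}{\alpha}$, since both codes are projections of $\grassmanncode{\ell}{m}$ and $\Omega_\alpha \subseteq \Omega_S$. Using that $\beta \leq \alpha \Rightarrow \Omega_\beta \subseteq \Omega_\alpha$ together with the fact that every $\beta \in S$ lies below some maximal element, one obtains $\Omega_S = \bigcup_{\alpha \in S'}\Omega_\alpha$. Thus a nonzero codeword $c \in \schubertcodealpha{\ell}{m}{S}$ must restrict to a nonzero $c^{\Omega_\alpha} \in \schubertcodealpha{\ell}{m}{\alpha}$ for at least one $\alpha \in S'$, and Xiang's theorem then gives $|supp(c)| \geq |supp(c^{\Omega_\alpha})| \geq q^{\delta_\alpha} \geq \min_{\beta \in S'} q^{\delta_\beta}$.

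For the \emph{upper bound}, for each $\alpha \in S'$ I would exhibit the explicit codeword $c^{(\alpha)} = (\mathrm{det}_\alpha(\vectorname{M}_W))_{W \in \Omega_S}$, which lies in $\schubertcodealpha{\ell}{m}{S}$ because $\mathrm{det}_\alpha$ is one of the generating Plücker coordinates of $\grassmanncode{\ell}{m}$. I would identify its support with the Schubert cell $\{W \in \Omega_\alpha \mid \mathrm{det}_\alpha(\vectorname{M}_W)\neq 0\}$, which parametrises $\ell$-dimensional subspaces whose row reduced echelon form has pivot columns exactly $\alpha$ and so has cardinality $\prod_{i} q^{\alpha_i - i} = q^{\delta_\alpha}$. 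The crucial containment is that this support lies in $\Omega_\alpha$: any $W \in \Omega_S$ lies in some $\Omega_{\alpha'}$ with $\alpha' \in S'$, and if $\alpha' \neq \alpha$, the maximality of $\alpha$ in $S$ forces $\alpha \not\leq \alpha'$, so $\mathrm{det}_\alpha$ appears among the vanishing equations defining $\Omega_{\alpha'}$.

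The main obstacle is precisely this last step in the upper bound. A priori it is not clear how to promote a minimum weight codeword of $\schubertcodealpha{\ell}{m}{\alpha}$ to one of $\schubertcodealpha{\ell}{m}{S}$, because a naive extension by zero need not satisfy the Tanner parity checks on lines that straddle $\Omega_\alpha$ and $\Omega_S \setminus \Omega_\alpha$. The trick is to sidestep this issue by using the global Plücker-coordinate codeword $\mathrm{det}_\alpha$ directly; the maximality of $\alpha \in S'$ automatically kills this codeword on every rival stratum $\Omega_{\alpha'}$ with $\alpha' \in S' \setminus \{\alpha\}$, so its support on $\Omega_S$ collapses to its support on $\Omega_\alpha$, which has the required size $q^{\delta_\alpha}$.
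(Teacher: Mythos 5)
Your proof is correct and follows the same overall strategy as the paper -- lower bound by applying Xiang's theorem to restrictions onto the maximal Schubert varieties, upper bound by exhibiting the Pl\"ucker-coordinate codeword $\mathrm{det}_\alpha$ for a minimizing $\alpha \in S'$ -- but you justify the two key steps differently, and more elementarily. For the lower bound the paper locates a nonzero position inside the information set $J_S$, whereas you only use the covering $\Omega_S = \bigcup_{\alpha \in S'} \Omega_\alpha$ (valid since $S$ is finite and downward closed), which suffices. For the upper bound the paper argues that $\mathrm{det}_\alpha$ vanishes on the points $W_\beta$, $\beta \in S' \setminus \{\alpha\}$, and then invokes Theorem \ref{thm:SchubertTannercode} (the Tanner-code/forcing-set characterization) to conclude $c^{\Omega_\beta} = 0$; you instead observe directly that maximality gives $\alpha \not\leq \beta$, so $\mathrm{det}_\alpha$ is one of the defining equations of $\Omega_\beta$ and vanishes identically there. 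Your route is self-contained and bypasses the Tanner machinery entirely, while the paper's version illustrates how its structural theorem can replace the algebraic vanishing argument. Both proofs rest on the same standard count $\#\{ W \in \Omega_\alpha \ | \ \mathrm{det}_\alpha(\vectorname{M}_W) \neq 0 \} = q^{\delta_\alpha}$ (the paper asserts it, you sketch the cell parametrization); only note that your phrase ``pivot columns exactly $\alpha$'' is correct under the echelon convention matching the flag used here, i.e.\ the cell in question is the one whose Bruhat-maximal nonvanishing Pl\"ucker coordinate is $\alpha$, not the cell of leftmost-pivot reduced echelon forms.
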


\begin{proof}

Let $c$ be a nonzero codeword of $\schubertcodealpha{\ell}{m}{S}$. As $J_S$ is an information set of $\schubertcodealpha{\ell}{m}{S}$, there exists $W \in J_S \cap supp(c).$ This $W \in J_\alpha$ for some $\alpha \in S'$. Therefore, the projection $c^{J_{\alpha}}$ is encoded to a nonzero codeword of $\schubertcodealpha{\ell}{m}{\alpha}$. By a theorem of Xiang \cite[Theorem 2]{X08} the weight of $c^{\Omega_\alpha}$ is at least $q^{\delta_\alpha}$. Therefore, the minimum distance of $\schubertcodealpha{\ell}{m}{S}$ is at least $min_{\alpha \in S'}\{ q^{\delta_\alpha} \}$. 

We prove equality by finding a codeword whose weight is exactly $min_{\alpha \in S'}\{ q^{\delta_\alpha} \}$. Consider the codeword $c \in \schubertcodealpha{\ell}{m}{S}$ corresponding to $\mathrm{det}_\alpha$ for $\alpha \in S'$. As  $\mathrm{det}_\alpha(W_\beta) = 0$ for $\beta \in  S' \setminus \{ \alpha\}$, Theorem \ref{thm:SchubertTannercode} implies $c^{\Omega_\beta}$ is the zero codeword. Therefore, $supp(c) \subseteq \Omega_\alpha$. As $c^{\Omega_\alpha}$ has $q^{\delta_\alpha}$ nonzero entries, $c^{\Omega_\alpha}$ is a minimum weight codeword of $\schubertcodealpha{\ell}{m}{\alpha}$. Hence, $\# supp(c) = q^{\delta_\alpha}$.  \end{proof}

In  \cite[Theorem 30]{BP15}  it was established that $\grassmanncode{\ell}{m}^\perp$ is generated by its minimum weight codewords. That proof is based on \cite[Theorem 34]{BGH12}, which proves that the dual codes of affine Grassmann codes are generated by their minimum weight codewords. We establish that  $\schubertcodealpha{\ell}{m}{S}^\perp$ is generated by its minimum weight codewords from the fact that Schubert union codes are maximal Tanner codes.

\begin{cor}
Let $S$ be a downward closed subset of $I(\ell,m)$. The code $\schubertcodealpha{\ell}{m}{S}^\perp$ is generated by its minimum weight codewords.
\end{cor}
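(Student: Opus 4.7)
The plan is to use the identification of Schubert union codes with maximal Tanner codes (Theorem \ref{thm:SchubertTannercode}) to produce an explicit spanning family of the dual $\schubertcodealpha{\ell}{m}{S}^\perp$ consisting of weight--$3$ codewords, and then separately verify that $3$ is the minimum distance of $\schubertcodealpha{\ell}{m}{S}^\perp$.

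First I would unwind Definition \ref{def:MaxTannercode} (as clarified in the proof of Lemma \ref{lem:MaxTannerthm}, where $D$ is generated by the local duals $D_u$) together with Theorem \ref{thm:SchubertTannercode} to obtain
$$\schubertcodealpha{\ell}{m}{S}^\perp = span\{D_L \ | \ L \in \lines(\Omega_S)\},$$
where each $D_L$ is supported on the $q+1$ coordinates of the line $L$ and, on those coordinates, is equivalent to the dual of the component code $\grassmanncode{1}{2}$. Since $\grassmanncode{1}{2}$ is the doubly extended Reed--Solomon code with parameters $[q+1, 2, q]$, each $D_L$ is an MDS code with parameters $[q+1, q-1, 3]$.

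Second, I would show that each $D_L$ is generated by its weight--$3$ codewords, which gives an explicit spanning set of $\schubertcodealpha{\ell}{m}{S}^\perp$ by weight--$3$ codewords. The argument is brief: fix two coordinates $a, b$ on $L$; for each of the remaining $q-1$ coordinates $c$, the MDS property yields a codeword of $D_L$ of weight exactly $3$ supported on $\{a, b, c\}$, and these $q-1$ codewords are linearly independent because their supports differ outside $\{a, b\}$. They thus form a basis of $D_L$, so $\schubertcodealpha{\ell}{m}{S}^\perp$ is spanned by codewords of weight $3$.

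Finally, I would confirm that $3$ is the minimum distance of $\schubertcodealpha{\ell}{m}{S}^\perp$, so that the spanning codewords produced above are genuine minimum weight codewords. The Pl\"ucker embedding sends distinct elements of $\grassmann{\ell}{m}$ to distinct projective points, so the columns of the generator matrix of $\schubertcodealpha{\ell}{m}{S}$ indexed by $\Omega_S$ are pairwise nonproportional, forcing $d(\schubertcodealpha{\ell}{m}{S}^\perp) \geq 3$; the weight--$3$ codewords of the $D_L$ make this tight. I expect the only step not delivered immediately by earlier results to be the second one, which requires the small explicit argument above about the dual of the doubly extended Reed--Solomon code; the rest is a direct consequence of Theorem \ref{thm:SchubertTannercode} and the injectivity of the Pl\"ucker embedding on projective points.
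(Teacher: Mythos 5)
Your proposal is correct and follows essentially the same route as the paper: identify $\schubertcodealpha{\ell}{m}{S}$ with the maximal Tanner code via Theorem \ref{thm:SchubertTannercode}, so that the dual is spanned by the local codes $D_L$ supported on lines, each equivalent to the $[q+1,q-1,3]$ dual of $\grassmanncode{1}{2}$ and generated by its weight--$3$ codewords. You merely fill in two details the paper takes for granted, namely the explicit weight--$3$ basis of the dual doubly extended Reed--Solomon code and the check (via distinct Pl\"ucker points) that $3$ is indeed the minimum weight of $\schubertcodealpha{\ell}{m}{S}^\perp$.
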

\begin{proof} Lemma \ref{lem:SchubertTannerSubcode} and Theorem \ref{thm:SchubertTannercode} imply $\schubertcodealpha{\ell}{m}{S}$ is a Tanner code with component code $ \grassmanncode{1}{2}$ and associated bipartite graph $\Gamma_{\Omega_S}$. Lemma \ref{lem:MaxTannerthm}  implies $\schubertcodealpha{\ell}{m}{S} \subseteq (G,C')_{\phi, \mathcal{C}}$ for the $\# V_2(G)$--tuples $\phi = (\phi_u) $ and $\mathcal{C}= (C'_u)$. Theorem \ref{thm:SchubertTannercode} implies $\schubertcodealpha{\ell}{m}{S} = (G,C')_{\phi, \mathcal{C}}$.

The code $(G,C')_{\phi, \mathcal{C}}^\perp$ is generated by $span(\{ D_u \ | \ u \in V_2(\Gamma_{\Omega_S})\} )$ where $D_u$ is the code in $\ff{q}^{V_1(\Gamma_{\Omega_S})}$ whose projection on $\ngraph{u}$ is $(\schubertcodealpha{\ell}{m}{S}^{\ngraph{u}})^\perp = (C'_u)^\perp$ and $supp(d) \subseteq \ngraph{u}$ for $d \in D_u$. Recall that the codewords of $D_u \leq \ff{q}^{\Omega_S}$ are zero outside of $\ngraph{u}$, and $D_u^{\ngraph{u}}  = \grassmanncode{1}{2}^\perp$. As $\grassmanncode{1}{2}^\perp$ is a $[q+1, q-1, 3]$ code which is generated by its minimum weight codewords, each $D_u$ is spanned by its weight $3$ codewords, $\schubertcodealpha{\ell}{m}{S}^\perp$ is also generated by its minimum weight codewords. \end{proof}

\begin{cor}
Let $S$ be a downward closed subset of $I(\ell,m)$. Suppose $\{ (1,2,\ldots, \ell)\} \neq S$. The minimum weight of  $\schubertcodealpha{\ell}{m}{S}^\perp$ is  $3$.
\end{cor}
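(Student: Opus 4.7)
The plan is to bound the minimum distance on both sides, using the preceding corollary for the upper bound and the injectivity of the Pl\"ucker embedding for the lower bound.

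For the upper bound, I would first verify that $\schubertcodealpha{\ell}{m}{S}^\perp$ is nonzero. Since $S$ is downward closed and properly contains $\{(1,2,\ldots,\ell)\}$, any $\alpha \in S$ with $\alpha \neq (1,2,\ldots,\ell)$ satisfies $\alpha_\ell \geq \ell+1$, so $\alpha \geq (1,2,\ldots,\ell-1,\ell+1)$; downward closure forces $(1,2,\ldots,\ell-1,\ell+1) \in S$. The corresponding Schubert variety $\Omega_{(1,2,\ldots,\ell-1,\ell+1)}$ is the line $\pi_Z^{Z'}$ of $\grassmann{\ell}{m}$ with $Z = \mathrm{span}(e_1, \ldots, e_{\ell-1})$ and $Z' = \mathrm{span}(e_1, \ldots, e_{\ell+1})$, so $\lines(\Omega_S)$ is nonempty. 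The preceding corollary then exhibits, for any $u \in V_2(\Gamma_{\Omega_S})$, a nonzero subcode $D_u \subseteq \schubertcodealpha{\ell}{m}{S}^\perp$ isomorphic to $\grassmanncode{1}{2}^\perp$, which is the $[q+1,q-1,3]$ MDS dual code. Hence $\schubertcodealpha{\ell}{m}{S}^\perp$ contains codewords of weight $3$.

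For the lower bound, I would work with the generators $g_I = (\det_I(\vectorname{M}_W))_{W \in \Omega_S}$ of $\schubertcodealpha{\ell}{m}{S}$ obtained by restricting $G$ to the columns indexed by $\Omega_S$. A vector $v \in \ff{q}^{\Omega_S}$ lies in the dual if and only if $\sum_{W \in \Omega_S} v_W \cdot ev(W) = 0$ in $\ff{q}^{I(\ell,m)}$. A weight-$1$ dual codeword would yield $v_W \cdot ev(W) = 0$ with $v_W \neq 0$, forcing $ev(W) = 0$, which contradicts that $ev$ maps $\grassmann{\ell}{m}$ into $\projectivespace{\binom{m}{\ell}-1}{\ff{q}}$. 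A weight-$2$ dual codeword supported on $\{W_1, W_2\}$ would yield a nontrivial linear dependence between $ev(W_1)$ and $ev(W_2)$, so these vectors would represent the same point of $\projectivespace{\binom{m}{\ell}-1}{\ff{q}}$; the injectivity of the Pl\"ucker embedding then forces $W_1 = W_2$, a contradiction.

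Combining the two bounds gives the minimum weight equal to $3$. The main obstacle in this plan is the combinatorial step verifying that $(1,2,\ldots,\ell-1,\ell+1) \in S$ under the hypothesis $S \neq \{(1,2,\ldots,\ell)\}$, since without a line in $\Omega_S$ the construction producing weight-$3$ dual codewords would be vacuous; everything else is a direct consequence of results already established in the excerpt.
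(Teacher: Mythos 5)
Your proof is correct and closely parallels the paper's argument, which is quite terse. For the upper bound, both you and the paper exhibit a line inside $\Omega_S$ and use the weight-$3$ vectors of the $[q+1,q-1,3]$ dual doubly-extended Reed--Solomon code supported on that line; you supply the explicit Bruhat-order verification that $(1,\ldots,\ell-1,\ell+1)\in S$ (and hence that $\lines(\Omega_S)\neq\emptyset$), which the paper asserts without proof. For the lower bound, the paper observes that $\schubertcodealpha{\ell}{m}{S}^\perp$ is a shortening of $\grassmanncode{\ell}{m}^\perp$, so its minimum weight is at least that of the dual Grassmann code, namely $3$; you instead give a direct, self-contained argument: a weight-$1$ dual word would force some $ev(W)=0$, contradicting nondegeneracy of the Pl\"ucker map, and a weight-$2$ dual word would force two distinct $W_1,W_2$ to have proportional Pl\"ucker vectors, contradicting injectivity. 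The two lower-bound arguments rest on the same underlying fact, but yours avoids citing the prior result on the minimum distance of $\grassmanncode{\ell}{m}^\perp$, which makes the corollary stand on its own.
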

\begin{proof}

It follows easily from the fact that $\{ (1,2,\ldots, \ell)\} \neq S$ implies $\Omega_S$ contains a line. Therefore, there is a weight $3$ codeword of $\grassmanncode{\ell}{m}^\perp$ whose support is contained in $\Omega_S$. As $\schubertcodealpha{\ell}{m}{S}^\perp$ is a shortening of  $\grassmanncode{\ell}{m}^\perp$ , the statement follows easily. \end{proof}

\section{Graph bounds applied to Schubert unions and related codes}

We have expressed Schubert union codes as Tanner codes with the point--line incidence structure Schubert unions inherit from the Grassmannian as their bipartite graph. This gives an iterative encoding algorithm and a proof that the dual codes of Schubert union codes are generated by their minimum weight codewords. However, Tanner codes are desirable for their iterative decoding algorithm and minimum distance bounds.  These bounds are derived from graph expansion. In this section, we use the Grassmann graph and its eigenvalues to find expressions for the minimum distance of the Grassmann code. In fact, in this way we attain both a lower bound and an upper bound for the weight of a codeword of  $\grassmanncode{\ell}{m}$. Our technique is different than the classical technique used to determine the minimum distance. We start by finding divisibility conditions on the weights of the codewords of Grassmann codes. These conditions improve the bounds from the eigenvalues of the Grassmann graph.

\begin{prop}\cite{S90} Let $Y$ be the support of a codeword, $c$, of $\grassmanncode{\ell}{m}$. Any line of $\grassmanncode{\ell}{m}$ has either $0$ or $q$ points in common with $Y$.\end{prop}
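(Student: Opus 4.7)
The plan is to exploit the fact, already established in Lemma \ref{lem:SchubertTannerSubcode} and in the proposition that the Pl\"ucker embedding sends a Grassmannian line to a projective line, that the restriction of a Grassmann codeword to any line $L \in \lines(\grassmann{\ell}{m})$ is a codeword of $\grassmanncode{1}{2}$. So I would first fix $c \in \grassmanncode{\ell}{m}$ and a line $L$, and note that $c^L$ lies in a code equivalent to the doubly extended Reed--Solomon code associated to $\projectivespace{1}{\ff{q}}$. The intersection $L \cap Y$ is then precisely the support of $c^L$ inside $L$, so it suffices to determine the possible weights of codewords of $\grassmanncode{1}{2}$.

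Next I would analyze the weight distribution of $\grassmanncode{1}{2}$ directly. The code has length $q+1$, dimension $2$, and is MDS with minimum distance $q$, so a nonzero codeword has weight at least $q$. To rule out weight $q+1$, I would observe that every codeword arises by evaluating a nonzero linear form $a X + b Y$ at the $q+1$ points of $\projectivespace{1}{\ff{q}}$, and such a linear form necessarily vanishes at exactly one projective point, namely $(-b : a)$. Hence every nonzero codeword has weight \emph{exactly} $q$, and the only two possibilities for the support size of $c^L$ inside $L$ are $0$ (if $c^L$ is the zero codeword) or $q$ (if $c^L$ is nonzero).

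Combining these two steps gives $\#(L \cap Y) \in \{0, q\}$, which is the conclusion. The main (and essentially only) substantive point is the weight enumeration of $\grassmanncode{1}{2}$; everything else is a direct translation using the Pl\"ucker embedding of a Grassmannian line. No obstacle of substance is anticipated, since the only thing that could go wrong would be a codeword of $\grassmanncode{1}{2}$ of weight $q+1$, and this is immediately excluded by the fact that a nonzero linear form over $\ff{q}$ has exactly one projective zero.
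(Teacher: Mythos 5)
Your proof takes essentially the same route as the paper: project $c$ onto the line $L$, identify $c^L$ as a codeword of the doubly extended Reed--Solomon code $\grassmanncode{1}{2}$ (a $[q+1,2,q]$ code), and read off that its weight is $0$ or $q$. You are slightly more careful than the paper's one-line argument in explicitly excluding weight $q+1$ via the observation that a nonzero linear form on $\projectivespace{1}{\ff{q}}$ has exactly one projective zero, but the substance is the same.
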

\begin{proof}Consider the projection $c^L$ for any line $L \in \lines(\grassmann{\ell}{m})$. As $c^{L}$ is a codeword of a $[q+1, 2, q]$ code, it is either the zero codeword, or it has weight $q$.\end{proof}

\begin{defn}\cite{P10, GPP09} For $S \in \grassmann{\ell-1}{m}(V)$ and $T \in \grassmann{\ell+1}{m}(V)$ define $$ [S\rangle  := \{ W \in \grassmann{\ell}{m} \ | S \subseteq W \}$$ and  $$\langle T] := \{ W \in \grassmann{\ell}{m} \ | W \subseteq T \}.$$ The sets are known as \emph{star cliques} or \emph{top cliques} respectively.
\end{defn}

In \cite{GPP09}, the sets  $[S| \rangle$ and $\langle| T]$, where  the linear spaces $S \in \grassmann{\ell-1}{m}(V)$ and $T \in \grassmann{\ell+1}{m}(V)$ are known as \emph{close families of type I and type II} respectively. This is because any two $\ell$--spaces in either clique set are adjacent in the Grassmann graph. Any line $\pi_S^T \in \lines(\grassmann{\ell}{m})$ equals $[S |\rangle \cap \langle| T]$, where $S \subseteq T$. Any two $\ell$ spaces $W$ and $W'$ such that $\dim W \cap W' = \ell -1$ are contained in a unique line. The authors in \cite{GPP09} find lower bounds for some generalized Hamming weights of Grassmann codes. They find these bounds by determining the number of indecomposable vectors of $\ff{q}$--linear subspaces of the wedge product. 

  Note that the Pl\"ucker embedding identifies $\langle T]$ with $\projectivespace{\ell+1}{\ff{q}}$ and $[S \rangle$  with $\projectivespace{m-\ell+1}{\ff{q}}$. Recall that the linear code associated to  $\projectivespace{\ell+1}{\ff{q}}$ is the Projective Reed--Muller code of degree $1$. It is a $[{{\ell+1 \brack 1}_q},\ell+1, q^\ell]$ code. Its dual code is a $[ {{\ell+1 \brack 1}_q}, {{\ell+1 \brack 1}_q}  -  (\ell+1), 3]$ cyclic code. Both codes are generated by their minimum weight codewords. 

\begin{prop}\cite[Lemma 5]{GPP09}

 Let $Y$ be the support of a codeword, $c$, of $\grassmanncode{\ell}{m}$. Let  $T \in \grassmann{\ell+1}{m}$.  The top clique $\langle T]$ has either $0$ or $q^{\ell}$ points in common with $Y$. If $S \in \grassmann{\ell-1}{m}$, then $[S\rangle$ has either $0$ or $q^{m- \ell}$ points in common with $Y$.
\label{prop:ReedMullerIntersection}
\end{prop}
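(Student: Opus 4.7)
The plan is to identify the restriction of $\grassmanncode{\ell}{m}$ to each of the two kinds of cliques with a first--order projective Reed--Muller code on a suitable projective space, and then exploit the fact that every nonzero codeword of such a code has one and the same weight. For the top clique, I would fix a basis of $\ff{q}^m$ so that $T = \mathrm{span}(e_1,\ldots,e_{\ell+1})$. Any $W \in \langle T]$ admits a representative matrix $\vectorname{M}_W$ whose last $m-\ell-1$ columns vanish, so the only Pl\"ucker coordinates $\mathrm{det}_I(\vectorname{M}_W)$ that can be nonzero on $\langle T]$ are the $\ell+1$ coordinates indexed by subsets $I \subseteq \{1,\ldots,\ell+1\}$ with $|I| = \ell$. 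Hence $ev(\langle T])$ spans an $\ell$--dimensional projective subspace of $\projectivespace{\binom{m}{\ell}-1}{\ff{q}}$, and $\grassmanncode{\ell}{m}^{\langle T]}$ is equivalent to the degree--$1$ projective Reed--Muller code on $\projectivespace{\ell}{\ff{q}}$, with parameters $[{\ell+1 \brack 1}_q,\ \ell+1,\ q^\ell]$. Every nonzero codeword of this code is the evaluation of a nonzero linear form, which vanishes on a hyperplane $\projectivespace{\ell-1}{\ff{q}}$ of size ${\ell \brack 1}_q$ and is nonzero on the remaining ${\ell+1 \brack 1}_q - {\ell \brack 1}_q = q^\ell$ points. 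Thus $|Y \cap \langle T]|$ is either $0$ or $q^\ell$.

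For star cliques $[S\rangle$ with $S \in \grassmann{\ell-1}{m}$, a dual argument applies. Pass to the quotient $\ff{q}^m / S$, which is $(m-\ell+1)$--dimensional and sets up a natural bijection between $[S\rangle$ and the set of $1$--dimensional subspaces of $\ff{q}^m/S$, i.e.\ $\projectivespace{m-\ell}{\ff{q}}$. Equivalently, after completing a basis of $S$ to a basis of $\ff{q}^m$, only the $m-\ell+1$ Pl\"ucker coordinates indexed by $I \supseteq \{1,\ldots,\ell-1\}$ can be nonzero on $[S\rangle$. In either form, $\grassmanncode{\ell}{m}^{[S\rangle}$ is identified with the degree--$1$ projective Reed--Muller code on $\projectivespace{m-\ell}{\ff{q}}$, whose nonzero codewords all have weight ${m-\ell+1 \brack 1}_q - {m-\ell \brack 1}_q = q^{m-\ell}$, which gives the second claim.

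The main obstacle, as I see it, is the identification of $\grassmanncode{\ell}{m}^{\langle T]}$ and $\grassmanncode{\ell}{m}^{[S\rangle}$ with first--order projective Reed--Muller codes. After the right choice of basis this amounts to checking that the surviving Pl\"ucker coordinates act, up to relabeling, as the standard linear coordinate functions on $\projectivespace{\ell}{\ff{q}}$ and $\projectivespace{m-\ell}{\ff{q}}$ respectively; this is essentially the classical statement, mentioned in the paragraph preceding the proposition, that the Pl\"ucker embedding maps each clique linearly onto a projective subspace of its image. Once this identification is in place, the uniform weight $q^r$ of the nonzero codewords of the first--order projective Reed--Muller code on $\projectivespace{r}{\ff{q}}$ closes the argument immediately.
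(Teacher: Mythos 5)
Your proof is correct and follows essentially the same route as the paper: identify the restriction of $\grassmanncode{\ell}{m}$ to a top or star clique with a first--order projective Reed--Muller code on $\projectivespace{\ell}{\ff{q}}$ or $\projectivespace{m-\ell}{\ff{q}}$ and use that its nonzero codewords all have weight $q^\ell$ or $q^{m-\ell}$. The only difference is that you verify the clique--to--projective--space identification directly via the surviving Pl\"ucker coordinates, whereas the paper simply cites Lemma 5 of \cite{GPP09} for that step.
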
\begin{proof} Lemma 5 of \cite{GPP09} implies a close family is mapped to some $\projectivespace{r}{\ff{q}}$. In this case, $r = \ell+1$ or $r = m-\ell+1$. The linear code associated to $\projectivespace{r}{\ff{q}}$ is a Reed--Muller code of degree $1$. The nonzero positions of any nonzero codeword of this Reed--Muller are the complement of a hyperplane of $\projectivespace{r}{\ff{q}}$. Therefore all nonzero codewords have weight $q^{r-1}$.\end{proof}

\begin{thm}

Let $Y = supp(c)$ where $c$ is a codeword of $\grassmanncode{\ell}{m}$. Both $q^{\ell}$ and  $q^{m -\ell}$ divide $\# Y$.
\label{thm:divisibilitybound}
\end{thm}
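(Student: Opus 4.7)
The plan is a double counting argument using Proposition 7.5 on close families. I would count in two ways the incidences between points of $Y$ and top cliques (resp. star cliques) that contain them, then extract the divisibility from the resulting identity.

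First I would establish the elementary combinatorial fact that each $W \in \grassmann{\ell}{m}$ is contained in exactly $[m-\ell]_q = \frac{q^{m-\ell}-1}{q-1}$ top cliques (one for each $(\ell+1)$-space containing $W$), and contains exactly $[\ell]_q = \frac{q^\ell - 1}{q-1}$ star cliques (one for each $(\ell-1)$-subspace of $W$). These are standard Gaussian-binomial counts.

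Next, for the divisibility by $q^\ell$, I would count pairs $(W,T)$ with $W \in Y$, $T \in \grassmann{\ell+1}{m}$, and $W \subseteq T$. Counting by $W$ first yields $\#Y \cdot [m-\ell]_q$. Counting by $T$ first and applying Proposition 7.5, each top clique $\langle T]$ contributes either $0$ or $q^\ell$ to the sum. Thus, if $n_{\text{top}}$ denotes the number of top cliques meeting $Y$ nontrivially,
\[
n_{\text{top}} \cdot q^\ell \;=\; \#Y \cdot [m-\ell]_q.
\]
Since $[m-\ell]_q = 1 + q + \cdots + q^{m-\ell-1}$ is coprime to the characteristic of $\ff{q}$, it is coprime to $q^\ell$, and so $q^\ell \mid \#Y$. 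The analogous argument with star cliques $[S\rangle$ yields $n_{\text{star}} \cdot q^{m-\ell} = \#Y \cdot [\ell]_q$, and since $[\ell]_q$ is coprime to $q^{m-\ell}$, we conclude $q^{m-\ell} \mid \#Y$.

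The only possible obstacle is verifying the intersection count from Proposition 7.5 in the case where a clique misses $Y$ entirely versus meeting it in the full hyperplane complement — but this is exactly the content of that proposition, so the argument is really just double counting plus a coprimality observation. No further Grassmannian geometry is needed beyond what is already recalled.
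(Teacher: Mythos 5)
Your proof is correct, and it takes a genuinely simpler double-counting route than the paper's. The paper computes $a_1(Y)$, the number of ordered pairs of distinct collinear points of $Y$, in two ways: first through lines --- each $W \in Y$ lies on ${m-\ell \brack 1}_q {\ell \brack 1}_q$ lines, each of which has exactly $q-1$ further points of $Y$ --- giving $a_1(Y) = \#Y\,(q-1){m-\ell\brack 1}_q{\ell\brack 1}_q$; then through top cliques (resp.\ star cliques), giving $a_1(Y) = f_1 q^\ell(q^\ell -1)$ (resp.\ $f_2 q^{m-\ell}(q^{m-\ell}-1)$), and extracts divisibility from $q \nmid (q-1){m-\ell\brack 1}_q{\ell\brack 1}_q$. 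You instead count the flags $(W,T)$ with $W \in Y$, $T \in \grassmann{\ell+1}{m}$, $W\subseteq T$, arriving at $\#Y \cdot {m-\ell\brack 1}_q = n_{\mathrm{top}}\,q^\ell$ directly; this avoids the line-intersection step entirely and the extra factors $(q-1)$ and $(q^\ell - 1)$, needing only Proposition~\ref{prop:ReedMullerIntersection} plus the observation that ${n\brack 1}_q \equiv 1 \pmod q$. The one thing the paper's heavier count buys that yours does not: the formula for $a_1(Y)$ is re-used verbatim as input to the Eisfeld eigenvalue bound (Proposition~\ref{prop:Eisfeld}) in the theorem that follows, so that extra work is doing double duty. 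For the divisibility statement alone, your version is cleaner.
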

\begin{proof}

Proposition \ref{prop:ReedMullerIntersection}  implies $Y$ is a geometric space. Let $$a_1(Y) := \# \{ (Y_1, Y_2) \in Y^2 \ | \ \dim Y_1 \cap Y_2 = \ell -1\}.$$ For $W \in Y$ there are ${m - \ell \brack 1}_q{\ell \brack 1}_q$ lines which contain $W$. These lines have exactly $q-1$ points in $Y$ different from $W$. Therefore, $$a_1(Y) = \# Y (q-1){m - \ell \brack 1}_q{\ell \brack 1}_q.$$

Let $f_1$ be the number top cliques with an element in $Y$. Each of those cliques has $q^{\ell}$ points in $Y$. Therefore, $$a_1(Y) = f_1q^{\ell}(q^{\ell}-1).$$ Likewise, if $f_2$ is the number of star cliques  with an element in $Y$, then $$a_1(Y) = f_2q^{m-\ell}(q^{m-\ell}-1).$$  As $q$ does not divide $(q-1){m - \ell \brack 1}_q{\ell \brack 1}_q$, it follows that $q^\ell$ and $q^{m-\ell}$ divide $\# Y$. \end{proof}

As the Grassmann code $\grassmanncode{\ell}{m}$ is a Tanner code, we also use eigenvalue arguments to find bounds on the size of the support of codewords of $\grassmanncode{\ell}{m}.$ As the colineation graph of the point--line incidence graph of the Grassmannian, (i.e. the Grassmann graph), is a distance transitive graph, 
we have the following theorem due to Eisfield. This bound is similar to the minimum distance bound of a Tanner code $(G,C')$ using the eigenvalues of its adjacency matrix.

\begin{prop}\cite{E98} Let $2\ell \leq m$ and $Y \subseteq \grassmann{\ell}{m}$. Suppose $a_1(Y)$ is tge number of ordered pairs of close vectors in $Y$, thay is $$a_1(Y) := \# \{ (Y_1, Y_2) \in Y^2 \ | \dim Y_1 \cap Y_2 = \ell -1\}$$  then
$$\frac{\# Y}{{m \brack \ell}_q}\left( \left(\theta_0-\theta_\ell\right) \# Y + \theta_\ell{m \brack \ell}_q \right)  \leq  a_1(Y)$$ and $$a_1(Y) \leq \frac{\# Y}{{m \brack \ell}_q}\left( \left(\theta_0-\theta_1\right) \# Y +  \theta_1{m \brack \ell}_q \right)$$

Where $$\theta_0 = q{m - \ell \brack 1}_q{\ell \brack 1}_q,$$ $$\theta_1 =  q^2{m - \ell-1 \brack 1}_q{\ell-1 \brack 1}_q-1 $$ and  $$\theta_\ell = - {\ell \brack 1}_q.$$
\label{prop:Eisfeld}
\end{prop}

Proposition \ref{prop:Eisfeld} gives lower and upper bounds for $\# supp(c)$, where  $c \in \grassmanncode{\ell}{m}$.

\begin{thm}

Let $Y = supp(c)$ where $c$ is a nonzero codeword of $\grassmanncode{\ell}{m}$. Then,  $${m \brack \ell}_q - {m-\ell \brack 1}_q{m-1 \brack \ell-1}_q  \leq \# Y$$ and $$ Y   \leq \frac{ q^{m-\ell}}{{m - \ell +1\brack 1}_q}{m \brack \ell}_q.$$

\end{thm}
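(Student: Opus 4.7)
The approach is to substitute an exact formula for $a_1(Y)$ into the inequalities of Proposition \ref{prop:Eisfeld} and then simplify using standard $q$-binomial identities; both bounds fall out of the same computation.

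First I would compute $a_1(Y)$ exactly. By the first proposition of this section, each line of $\grassmann{\ell}{m}$ meets $Y$ in either $0$ or $q$ points. Each $W \in Y$ lies on exactly ${m-\ell \brack 1}_q {\ell \brack 1}_q$ lines (pairs $Z \subset W \subset Z'$), and every such line meeting $Y$ contributes $q-1$ points of $Y$ distinct from $W$. Counting ordered collinear pairs,
$$a_1(Y) = \#Y \cdot (q-1){m-\ell \brack 1}_q {\ell \brack 1}_q.$$

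For the upper bound I substitute this $a_1(Y)$ into the left inequality of Proposition \ref{prop:Eisfeld} and solve for $\#Y$. Using $\theta_\ell = -{\ell \brack 1}_q$ and the elementary identity $(q-1){m-\ell \brack 1}_q = q^{m-\ell}-1$, the numerator $(q-1){m-\ell \brack 1}_q{\ell \brack 1}_q - \theta_\ell$ collapses to $q^{m-\ell}{\ell \brack 1}_q$. The denominator is $\theta_0 - \theta_\ell = {\ell \brack 1}_q\bigl(q{m-\ell \brack 1}_q + 1\bigr) = {\ell \brack 1}_q{m-\ell+1 \brack 1}_q$ via the identity $q{m-\ell \brack 1}_q + 1 = {m-\ell+1 \brack 1}_q$. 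Cancelling ${\ell \brack 1}_q$ produces the claimed upper bound.

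For the lower bound the same substitution into the right inequality of Proposition \ref{prop:Eisfeld} is more laborious. Using $q{m-\ell-1 \brack 1}_q = {m-\ell \brack 1}_q - 1$ and $q{\ell-1 \brack 1}_q = {\ell \brack 1}_q - 1$ one rewrites
$$\theta_1 = q^2{m-\ell-1 \brack 1}_q{\ell-1 \brack 1}_q - 1 = {m-\ell \brack 1}_q{\ell \brack 1}_q - {m-\ell \brack 1}_q - {\ell \brack 1}_q.$$
This yields $\theta_0 - \theta_1 = q^{m-\ell}{\ell \brack 1}_q + {m-\ell \brack 1}_q$, which by the geometric series expansion of $q^m - 1$ equals ${m \brack 1}_q$. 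The numerator becomes $(q-1){m-\ell \brack 1}_q{\ell \brack 1}_q - \theta_1 = {m \brack 1}_q - {\ell \brack 1}_q{m-\ell \brack 1}_q$. Solving yields
$$\#Y \geq {m \brack \ell}_q - \frac{{\ell \brack 1}_q{m-\ell \brack 1}_q}{{m \brack 1}_q}{m \brack \ell}_q,$$
and the classical identity ${m \brack \ell}_q/{m-1 \brack \ell-1}_q = {m \brack 1}_q/{\ell \brack 1}_q$ rewrites the subtracted term as ${m-\ell \brack 1}_q{m-1 \brack \ell-1}_q$, completing the bound. The main obstacle is simply the bookkeeping of Gaussian binomials; no conceptual input is needed beyond the exact count of $a_1(Y)$ and Proposition \ref{prop:Eisfeld}.
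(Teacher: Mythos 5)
Your proposal is correct and follows the paper's proof exactly: the paper likewise substitutes the exact count $a_1(Y) = \#Y\,(q-1){m-\ell \brack 1}_q{\ell \brack 1}_q$ into Proposition \ref{prop:Eisfeld} and solves for $\#Y$, with the left (resp.\ right) eigenvalue inequality yielding the upper (resp.\ lower) bound. You have merely written out the Gaussian-binomial simplifications that the paper leaves implicit, and those computations check out.
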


\begin{proof} Substitute $$\# a_1(Y) :=  \# Y (q-1){m - \ell \brack 1}_q{\ell \brack 1}_q$$ in Proposition \ref{prop:Eisfeld}  and solve for $\# Y$. \end{proof}


By expressing Grassmann codes as a Tanner code, Theorem \ref{thm:divisibilitybound} gives a lower bound on the minimum weight of the Grassmann code  from the eigenvalues of the Grassmann graph. For example, D. Nogin in \cite{N93} intersects Grassmannians with linear varieties. In \cite{GPP09}, the authors determined the least number of decomposable vectors in a linear subspace of dimension $r$ of the wedge product. This is a geometric subspace of the Grassmannian.  Principally, they give a lower bound for $ev(\grassmann{\ell}{m}) \cap D$ in terms of $\dim D$.  We expect Schubert unions and  higher weights of the Grassmann code could be studied with Proposition \ref{prop:Eisfeld}. Geometric subspaces will be important in this regard.  For example, we have the following corollary from \cite{GPP09, BP15} and  \cite{JL07}.

\begin{lem}

Let $d < d'$  be two successive Generalized Hamming weights of $\schubertcodealpha{\ell}{m}{S}^\perp$. Then, $1 \leq d' - d \leq 2$.

\end{lem}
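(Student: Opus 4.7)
The plan is to handle the two inequalities separately. Set $E := \schubertcodealpha{\ell}{m}{S}^\perp$, choose a dimension-$r$ subcode $D \subseteq E$ achieving $|\mathrm{supp}(D)| = d = d_r(E)$, and write $A := \mathrm{supp}(D)$. For the lower bound $d' \geq d+1$ I would invoke the standard strict monotonicity of generalized Hamming weights: any dimension-$(r{+}1)$ subcode $D' \supseteq D$ must satisfy $\mathrm{supp}(D') \supsetneq A$, for otherwise $D'$ would witness $d_{r+1}(E) \leq |A| = d$, contradicting $d' > d$. Since the supports are integers, $d' \geq d+1$ follows at once.

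For the upper bound $d' \leq d+2$ I would exploit the Tanner structure. By the preceding corollary, together with Lemma \ref{lem:SchubertTannerSubcode} and Theorem \ref{thm:SchubertTannercode}, the code $E$ is generated by its weight-$3$ codewords, each supported on a $3$-subset of some line $L \in \lines(\Omega_S)$, obtained as a minimum weight codeword of a local copy of $\grassmanncode{1}{2}^\perp$---the $[q+1, q-1, 3]$ MDS dual of the doubly-extended Reed--Solomon code. Because this local code is MDS of minimum distance $3$, every $3$-subset of $L$ supports a weight-$3$ codeword of $E$. The plan is to exhibit a line $L \in \lines(\Omega_S)$ with $1 \leq |L \cap A| \leq q$, choose a $3$-subset $T \subseteq L$ with $|T \cap A| \in \{1, 2\}$, and take the corresponding weight-$3$ codeword $c \in E$ with $\mathrm{supp}(c) = T$. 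Then $c \notin D$ (since $T \not\subseteq A$) and the subcode $D + \langle c \rangle$ has dimension $r+1$ and support $A \cup T$ of size at most $d+2$, yielding $d_{r+1} \leq d+2$.

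The main obstacle is producing such a line $L$. The first observation is that minimality of $|A|$ forces $D$ to equal the shortening $E_A := \{c \in E : \mathrm{supp}(c) \subseteq A\}$: any $c \in E_A \setminus D$ would give $D + \langle c \rangle$ of dimension $r+1$ supported inside $A$, contradicting $d_{r+1} > d_r$. The degenerate case in which no line $L$ has $1 \leq |L \cap A| \leq q$ amounts to the assertion that every line of $\lines(\Omega_S)$ is either contained in $A$ or disjoint from $A$. I would rule this out by appealing to the line-connectedness of $\Omega_S$: since $S$ is downward closed and nonempty, the basepoint $W_{(1,2,\ldots,\ell)}$ lies in every $\Omega_\alpha$ with $\alpha \in S$, and each Schubert variety $\Omega_\alpha$ is line-connected via Bruhat covering relations, so $\Omega_S$ is line-connected. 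Consequently, $A$ is a nonempty proper subset of $\mathrm{supp}(E) \subseteq \Omega_S$ (properness following from $D \neq E$), so some line of $\lines(\Omega_S)$ must cross the boundary of $A$ and meet both sides, contradicting the degeneracy assumption and producing the required $L$.
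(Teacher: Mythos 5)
Your lower bound argument is the standard strict monotonicity of generalized Hamming weights and is fine. For the upper bound, your approach is genuinely different in texture from the paper's: the paper simply records that the minimum distance is $3$, that every coordinate of $\Omega_S$ lies in the support of some minimum--weight codeword (citing \cite{GPP09, BP15}), and then invokes the Janwa--Lal result \cite{JL07} as a black box to conclude $d' - d \leq 2$. You instead unpack what such a result would have to say, using the Tanner structure directly: since $E := \schubertcodealpha{\ell}{m}{S}^\perp$ is, by the preceding corollary, spanned by weight--$3$ codewords whose supports are $3$--subsets of lines, and since each local dual $\grassmanncode{1}{2}^\perp$ is MDS of minimum distance $3$ (so \emph{every} $3$--subset of a line carries a weight--$3$ codeword of $E$), it suffices to exhibit a line meeting $A = \mathrm{supp}(D)$ in between $1$ and $q$ points. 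This is a nice, self--contained, and more informative route: it makes visible exactly which geometric fact carries the burden.

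That geometric fact is where your argument has a genuine gap. You assert that $\Omega_S$ is line--connected ``via Bruhat covering relations'' and conclude that, $A$ being a nonempty proper subset of $\mathrm{supp}(E) = \Omega_S$, some line must straddle the boundary of $A$. The conclusion from connectedness is fine, but the connectedness claim itself is the crux and is not proved; nothing in the paper (nor in your sketch) rules out that $A$ is a proper geometric subspace of $\Omega_S$, in which case \emph{every} line of $\lines(\Omega_S)$ meets $A$ in $0$ or $q+1$ points (Lemma~\ref{lem:2forcinggeom2}) and your construction produces no crossing line. And this is not a pedantic worry: if $E$ could decompose as $E_A \oplus E'$ with $A$ and $\mathrm{supp}(E')$ disjoint nonempty geometric subspaces, both pieces generated by weight--$3$ words, then every position would still be covered by a minimum--weight codeword yet successive GHWs could jump by $3$. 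So some form of indecomposability or line--connectedness of $\Omega_S$ is essential, not cosmetic, and it needs an actual proof --- for instance by exploiting Theorem~\ref{thm:closureunion} (that $J_S$ already $2$--forces all of $\Omega_S$, so every point of $\Omega_S$ is reached from $J_S$ by a chain of lines each sharing two prior points) together with the fact that $J_S$ itself is line--connected through the common base point $W_{(1,\ldots,\ell)}$. With that lemma supplied, your argument closes cleanly and is arguably more transparent than the paper's terse citation of \cite{JL07}.
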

\begin{proof}
Note that the minimum distance $\schubertcodealpha{\ell}{m}{S}^\perp$ is three. \cite{GPP09, BP15} imply that for each element in $\Omega_S$ there is a minimum weight codeword with that position in its support.  \cite{JL07} implies $d'-d \leq 2$.\end{proof}

As any successive generalized Hamming weights of  $\schubertcodealpha{\ell}{m}{S}^\perp$ increase by $1$ or $2$, we need to determine the numbers where the increase is $2$ instead of $1$. This skip gives one of the Generalized Hamming weighs of $\schubertcodealpha{\ell}{m}{S}$.  In fact, we suspect that the linear subspaces of $\schubertcodealpha{\ell}{m}{S}^\perp$ whose support is $2$--closed will play an important role.


\section*{Conclusion}

We have used irreversible $k$--threshold processes to simulate encoding a Tanner code. This defines an iterative lengthening algorithm using only the component codes. When a $k$--forcing set is also an information set for the Tanner code, we also have an iterative and systematic encoder. We have found that some special subsets of the Grassmannian, are both information sets and $2$--forcing sets for the point--line incidence graph of the Schubert union. This also implies the dual Schubert union codes are generated by their minimum weight codewords. As an application we have also found the minimum distance of Schubert union codes. We have also found lower, upper bounds  and a divisibility condition on the weight of a codeword of $\grassmanncode{\ell}{m}$. We expect that similar techniques will also apply for Schubert codes and the generalized Hamming weights of Grassmann codes.

\section*{Acknowledgements}
The author would like to  thank Sudhir R. Ghorpade and J\o{}rn Justesen for many fruitful discussions. The bulk of this research was carried at the Technical University of Denmark. Part of this work was carried out in the Indian Institute of Technology-- Bombay. The author gratefully acknowledges the support from the Danish National Research Foundation and the National Science Foundation of China (Grant No.11061130539) for the Danish-Chinese Center for Applications of Algebraic Geometry in Coding Theory and Cryptography. The author also thanks the reviewers for their time and advice.


\end{document}